\documentclass[a4paper,reqno,final]{amsart}

\usepackage{amssymb}
\usepackage{mathtools}
\usepackage{booktabs}
\usepackage[margin=10pt]{caption}
\usepackage{dsfont}

\newcommand{\set}[1]{\left\{#1\right\}}
\newcommand{\abs}[1]{\left\lvert#1\right\rvert}
\newcommand{\integer}{\mathds Z}
\newcommand{\nat}{\mathds N}
\newcommand{\real}{\mathds R}
\newcommand{\Normal}{\mathcal N}
\DeclareMathOperator{\Ee}{\mathds E}
\DeclareMathOperator{\sgn}{\mathrm sgn}

\newtheorem{theorem}{Theorem}[section]
\newtheorem{proposition}[theorem]{Proposition}
\newtheorem{lemma}[theorem]{Lemma}
\newtheorem{corollary}[theorem]{Corollary}

\newtheorem{conjecture}{Conjecture}

\theoremstyle{remark}
\newtheorem{remark}[theorem]{Remark}

\usepackage[german=guillemets]{csquotes} % command \enquote for inverted commas
%\setquotestyle[guillemets]{german} % using german-style Guillemets Standard \enquote{.....}
\setquotestyle[british]{english}

%Rechtschreibung, sprache
\usepackage[UKenglish]{babel} %ngerman f\"{u}r neue rechtschreibung

\makeatletter
\@namedef{subjclassname@2020}{%
  \textup{2020} Mathematics Subject Classification}
\makeatother

\renewcommand\labelenumi{\textup{\arabic{enumi}.}}
\renewcommand\theenumi\labelenumi
\begin{document}

\title[Fractional Gaussian noise: analytical \& computational problems]{\bfseries Analytical and computational problems related to fractional Gaussian noise}

\author[Yu.\ Mishura]{Yuliya Mishura}
\address{Taras Shevchenko National University of Kyiv, Department of Probability Theory, Statistics and Actuarial Mathematics, 64/13, Volodymyrska, 01601 Kyiv, Ukraine}
\email{yuliyamishura@knu.ua}

\author[K.\ Ralchenko]{Kostiantyn Ralchenko}
\address{Taras Shevchenko National University of Kyiv, Department of Probability Theory, Statistics and Actuarial Mathematics, 64/13, Volodymyrska, 01601 Kyiv, Ukraine}
\email{kostiantynralchenko@knu.ua}

\author[R.L.~Schilling]{Ren\'e L.\ Schilling}
\address{TU Dresden\\ Fakult\"{a}t Mathematik\\ Institut f\"{u}r Mathematische Stochastik\\ 01062 Dresden, Germany}
\email{rene.schilling@tu-dresden.de}

\keywords{Fractional Brownian motion; fractional Gaussian noise; coefficients of projection; conjecture; covariance matrix; autocovariance function; completely monotonic function.}
\subjclass[2020]{Primary: 60G22; Secondary: 60G15; 60E99.}

\begin{abstract}
    We study the projection of an element of fractional Gaussian noise onto its neighbouring elements. We prove some analytic results for the coefficients of this projection, in particular, we obtain recurrence relations for them. We also make several conjectures concerning the behaviour of these coefficients, provide numerical evidence supporting these conjectures, and study them theoretically in particular cases. As an auxiliary result of independent interest, we investigate the covariance function of fractional Gaussian noise, prove that it is completely monotone for $H>1/2$, and, in particular monotone, convex, log-convex along with further useful properties.
\end{abstract}

\maketitle

\section{Introduction}\label{section1}

This paper is about some (conjectured) properties of the projection  of an element of fractional Gaussian noise onto the neighbouring elements. Unfortunately, not all our conjectures are amenable to analytical proofs, while numerical experiments confirm their validity. This is indeed rather strange, as the properties of fractional Brownian motion and its increments have been thoroughly studied, attracting a lot of research efforts resulting in countless papers and several books, e.g.\ \cite{biagini,Mishura2008,nourdin,samorodnitsky}. These books are mostly devoted to the stochastic analysis of fractional processes, the properties of their trajectories, distributional properties of certain functionals of the paths, and related issues. There is, however, an area where much less is known: problems relating to the covariance matrix of fractional  Brownian motion and fractional Gaussian noise in high dimensions, and its determinant. Computational features of the covariance matrices are widely used for simulations and in various applications, see e.g.\ \cite{dieker,gupta,kijima,montillet}. The problem considered in the present paper arose in the following way: In \cite{risks} the authors construct a discrete process that converges weakly to a fractional Brownian motion (fBm) $B^H=\{B^H_t, t\ge 0\}$ with Hurst parameter $H\in(\frac12,1)$. The construction of this process is based on the Cholesky decomposition of the covariance matrix of the fractional Gaussian noise (fGn). Several interesting properties of this decomposition are proved in \cite{risks} such as the positivity of all elements of the corresponding triangular matrix and the monotonicity along its main diagonal. Numerical examples suggest also the conjecture, that one has monotonicity along all diagonals of this matrix. However, the analytic proof of this fact remains an open problem. Studying this problem, the authors of the paper \cite{risks} establish a connection between the predictor's coefficients -- that is, the coefficients of the projection of any value of a stationary Gaussian process onto finitely many subsequent elements -- and the Cholesky decomposition of the covariance matrix of the process. It turns out that the positivity of
the coefficients of the predictor implies the monotonicity along the diagonals of the triangular matrix of the Cholesky decomposition of fGn, which is sufficient for the monotonicity along the columns of the triangular matrix in the Cholesky decomposition of fBm itself; this property, in turn, ensures the convergence of a wide class of  discrete-time schemes to a fractional Brownian motion. We will see in Section~\ref{ssec:system} below, that the coefficients of the predictor can be found as the solution to a system of linear equations, whose coefficient matrix coincides with the covariance matrix of fGn. This enables us to reduce the monotonicity problem for the Cholesky decomposition to proving the positivity of the solution to a linear system of equations. However, see Section~\ref{section2}, even in the particular case of a $3\times3$-matrix, an analytic proof of positivity of all coefficients is a non-trivial problem. For the moment, we have only a partial solution. Therefore, we formulate the following conjecture:

\begin{conjecture}\label{con:A}
    If $H>1/2$, then the coefficients of the projection of any element of fractional Gaussian noise onto any finite number of its subsequent elements are strictly positive.
\end{conjecture}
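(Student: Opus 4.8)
The plan is to first make the projection problem explicit. Writing $\rho(k)=\Ee[X_0X_k]$ for the autocovariance of fractional Gaussian noise and $c=(c_1,\dots,c_n)$ for the coefficients of the projection of $X_0$ onto $X_1,\dots,X_n$, the orthogonality relations $\Ee[(X_0-\sum_j c_jX_j)X_i]=0$ turn into the Toeplitz system
$$\sum_{j=1}^n \rho(\abs{i-j})\,c_j=\rho(i),\qquad i=1,\dots,n,$$
that is, $\Gamma_n c=b_n$ with $\Gamma_n=(\rho(\abs{i-j}))_{i,j=1}^n$ and $b_n=(\rho(1),\dots,\rho(n))^{\top}$. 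Since $\Gamma_n$ is the covariance matrix of $(X_1,\dots,X_n)$ it is symmetric and positive definite, and by time reversibility of the stationary process the $c_j$ coincide, up to order, with the Yule--Walker (autoregressive) coefficients; thus the conjecture is equivalent to the positivity of all finite-order AR coefficients of the noise. Throughout I would exploit the auxiliary result that, for $H>1/2$, $\rho$ is completely monotone, so that $\rho(k)=\int_0^1 x^k\,\mu(dx)$ for a finite positive measure $\mu$, and in particular $\rho$ is positive, decreasing, convex and log-convex.

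Next I would treat the low-order cases by Cram\'er's rule. Since $\det\Gamma_n>0$, positivity of $c_j$ is equivalent to positivity of the determinant $D_j$ of the matrix obtained from $\Gamma_n$ by replacing its $j$-th column with $b_n$. For $n=1$ one has $c_1=\rho(1)/\rho(0)>0$ by positivity of $\rho$; for $n=2$ a direct computation gives $c_1\propto\rho(1)(\rho(0)-\rho(2))>0$ (monotonicity) and $c_2\propto\rho(0)\rho(2)-\rho(1)^2\ge 0$, which is exactly the log-convexity of $\rho$. Already at $n=3$ the positivity of $D_1,D_2,D_3$ reduces to polynomial inequalities in $\rho(0),\dots,\rho(3)$ that are not implied by pairwise log-convexity alone; this is the source of the difficulty noted for the $3\times3$ case, and a natural sub-step is to extract from the moment representation the higher-order inequalities (positivity of $3\times3$ Hankel-type minors of the sequence $(\rho(k))$) needed to close these estimates.

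For general $n$ the approach I would favour is an induction based on the Durbin--Levinson recursion. With $\alpha_n$ the partial autocorrelation at lag $n$ and $v_{n-1}=\rho(0)\prod_{k=1}^{n-1}(1-\alpha_k^2)>0$ the prediction-error variance, the coefficients obey
$$\alpha_n=\frac{\rho(n)-\sum_{l=1}^{n-1}\phi^{(n-1)}_l\rho(n-l)}{v_{n-1}},\qquad \phi^{(n)}_l=\phi^{(n-1)}_l-\alpha_n\phi^{(n-1)}_{n-l}\ \ (1\le l\le n-1),\qquad \phi^{(n)}_n=\alpha_n.$$
The induction would need two inputs: (i) that every partial autocorrelation $\alpha_n$ is strictly positive, and (ii) that the update preserves positivity, i.e. $\phi^{(n-1)}_l>\alpha_n\phi^{(n-1)}_{n-l}$ for all $l$.

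The hard part is step (ii). Because the recursion subtracts the reversed coefficient vector scaled by $\alpha_n\in(0,1)$, preserving positivity requires quantitative control on the shape of $\phi^{(n-1)}$ --- for instance a monotonicity or unimodality estimate comparing $\phi^{(n-1)}_l$ with $\phi^{(n-1)}_{n-l}$ --- and such a shape property does not seem to follow directly from complete monotonicity of $\rho$. Step (i) is itself a global determinantal condition (positivity of a ratio of Toeplitz determinants), equivalent, via block inversion of the covariance matrix, to the statement that the relevant off-diagonal entries of the precision matrix are nonpositive, i.e. to an M-matrix / MTP$_2$-type sign pattern for fractional Gaussian noise. I expect that reformulation to be equally resistant: it again asks for a sign-regularity property of the Toeplitz kernel $\rho(\abs{i-j})$ that is stronger than, and not a formal consequence of, the complete monotonicity of $\rho$. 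This is precisely why, at present, only the partial results for small $n$ are available.
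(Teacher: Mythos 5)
Your proposal is not a proof, and you say so yourself: it reduces the conjecture to (i) strict positivity of every partial autocorrelation $\alpha_n$ and (ii) the inequality $\phi^{(n-1)}_l>\alpha_n\phi^{(n-1)}_{n-l}$, and proves neither. But be aware that the statement is not proved in the paper either: it is Conjecture \ref{con:A}, explicitly formulated as an open problem, and the paper only establishes partial results. So the honest verdict is: a genuine gap, but the same gap the authors have. What is worth recording is that your programme coincides, almost item by item, with the paper's partial programme. Your Toeplitz/Yule--Walker system is the system \eqref{eq:system}; your moment representation of $\rho$ is the complete monotonicity of Lemma \ref{l:cm} together with the Bernstein representation \eqref{eq:bernstein-repr} (note that complete monotonicity holds on $(1,\infty)$, so this represents $\rho_k$ for $k\ge1$ only --- $\rho_0=1$ is not automatically part of the moment sequence, which matters if you want Hankel-type minors involving $\rho_0$); your two-predictor case and its reduction to log-convexity is Proposition \ref{prop:n3-monot}; your three-predictor difficulty is exactly the paper's $n=4$ case, where $\Gamma_4^2>\Gamma_4^3$ and $\Gamma_4^4>0$ are proved analytically (Proposition \ref{prop:n4-monot}, via $\rho_1^2<\rho_3$ and $\rho_1\rho_3>\rho_2^2$) but positivity of the middle coefficient, i.e.\ \eqref{eq:posit2}, is only verified numerically; and your Durbin--Levinson recursion is precisely the paper's recurrence relations \eqref{eq:Cn+1n+1}--\eqref{eq:Cn+1k}, derived there from the normal correlation theorem.

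Two concrete warnings about the parts you leave open. First, any attempt to obtain step (ii) from monotonicity or unimodality of the coefficient vector conflicts with the paper's numerics: for large $H$ the coefficients are \emph{not} decreasing in the lag ($\Gamma_4^4>\Gamma_4^3$ for $H>0.752\dots$, see Figure \ref{fig:n=4}, and $\Gamma_n^n>\Gamma_n^{n-1}$ in Tables \ref{tab:0.8}--\ref{tab:0.99}); observation 6 in Section \ref{section3} records exactly how an induction through the recursion breaks for this reason. So only the bare inequality $\phi^{(n-1)}_l>\alpha_n\phi^{(n-1)}_{n-l}$ can be expected to hold, and no shape property implying it is visible from complete monotonicity alone. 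Second, the one general-$n$ statement the paper does offer --- positivity of the first coefficient $\Gamma_{n+1}^2$ (Section \ref{ssec:first-pos}) --- is, in your notation, the $l=1$ case of step (ii): the proof rewrites the relevant numerator as $\sum_{k=2}^n\Gamma_n^k\left(\rho_{k-2}-\rho_k\right)$ and invokes monotonicity of $(\rho_k)$; note that this final step consumes the positivity of \emph{all} level-$n$ coefficients, so it is really an inductive-step statement rather than an unconditional one, and it leaves your step (i) and the cases $l\ge2$ of step (ii) untouched. In short: your route is the paper's route, and it stops where theirs stops.
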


Due to stationarity, it is sufficient to establish \ref{con:A} for
\begin{gather*}
    \Ee \left(\Delta_1 \mid \Delta_2,\dots,\Delta_n\right),\quad n\ge 3,
\end{gather*}
where $B^H$ denotes fBM and $\Delta_k=B^H_k-B^H_{k-1}$.  Having computational evidence but lacking an analytical proof for conjecture \ref{con:A}, we provide in this paper a wide range of associated properties of coefficients, some with an analytic proof, and some obtained using various computational tools. It is, in particular, interesting to study the asymptotic behaviour of the coefficients as $H\uparrow 1$. This is particularly interesting since $H=1$ fractional Brownian motion $B^1$ is degenerate, i.e.\ $B^1_t= t\xi$, where $\xi\sim\Normal(0,1)$, and $\Normal(0,1)$ denotes the standard normal distribution. Consequently, $\Delta_k\sim\Normal(0,1)$ for all $k\ge 1$, and
\begin{gather*}
    \Ee \left(\Delta_1 \mid \Delta_2,\dots,\Delta_n\right)
    = \sum_{k=2}^n \alpha_k \Delta_k
    \sim \Normal(0,1)
\end{gather*}
for any convex combination $\alpha_k\ge 0,\; \sum_{k=2}^n \alpha_k=1$. This shows that in the case $H=1$ the values of the coefficients are indefinite, and therefore they cannot define the  asymptotic behaviour of the prelimit coefficients as $H\uparrow 1$. It will be very \enquote{elegant} if  all coefficients tend to $(n-1)^{-1}$, however, in reality  their asymptotic behaviour is different, see Section~\ref{ssec:partic}. Another interesting question are the relations between the coefficients. It is natural to assume that they decrease as $k$ increases, but the situation here is also more involved, essentially depending on the value of $H$.
In Section~\ref{ssec:recurrent} we prove some recurrence relations between the coefficients. These relations lead to a computational algorithm which is more efficient than solving the system of equations as described in Section~\ref{ssec:system}. Finally, it turns out that the positivity of the first coefficient can be proven analytically for all values of $n$; this result is established in Section~\ref{ssec:first-pos}.

We close the paper with a few numerical examples, supporting our theoretical results and conjectures. In particular, we compute the coefficients for all $n\le 10$ and for various values of $H$, and discuss their behaviour. Also, we compare different calculation methods for the coefficients in terms of computing time, and we demonstrate the advantage of the approach via recurrence formulae in most cases.

The paper is organized as follows: Section \ref{section2} contains almost all properties of the predictor's coefficients that can be established analytically, and it introduces the system of linear equations for these coefficients and some properties of the coefficients of this system. We consider in detail two particular cases: $n=3$ and $n=4$. In these cases we  prove the positivity of all coefficients, establish some relations between them, and study the asymptotic behaviour as $H\uparrow1$. We also obtain recurrence relations for the coefficients, and prove that for all values of $n$ the first coefficient is positive. Section \ref{section3} contains some numerical illustrations of the properties and conjectures from Sections \ref{section1} and \ref{section2}. In Section~\ref{ssec:H<1/2} we briefly discuss some observations concerning the case $H<1/2$.

\section{Analytical properties of the coefficients}\label{section2}
Let $B^H = \set{B^H_t, t\ge0}$ be a fractional Brownian motion (fBm) with Hurst index $H\in(\frac12,1)$. We use
\begin{gather*}
    \Delta_n = B^H_n - B^H_{n-1},
    \quad n=1,2,3,\dots.
\end{gather*}
for the $n$th increment of fBM. It is well known that the process $B^H$ has stationary increments, which implies that $\set{\Delta_n,n\ge1}$ is a stationary Gaussian sequence (known as \emph{fractional Gaussian noise} -- fGn for short) with the following autocovariance function:
\begin{gather}\label{eq:rho_k}
    \rho_k = \Ee \Delta_1 \Delta_{k+1}
    =\frac12 \left(\abs{k+1}^{2H} - 2\abs{k}^{2H} + \abs{k-1}^{2H}\right),
    \quad k\ge 1.
\end{gather}
Obviously, $\rho_0 =1$. Since the joint distribution of $(\Delta_1, \dots, \Delta_n)$ is centred and Gaussian, we obtain the following relation from the theorem on normal correlation (see, e.g.\ \cite[Theorem 3.1]{tsp}):
\begin{gather}\label{eq:proj1}
    \Ee \left(\Delta_1 \mid \Delta_2,\dots,\Delta_n\right)
    = \sum_{k=2}^n \Gamma_n^k \Delta_k,
    \quad n\ge2,
\end{gather}
where $\Gamma_n^k\in \real$. Let us consider two approaches to calculation of the coefficients $\Gamma_n^k$. The first method is straightforward, it involves solving of the system of linear equations. The second one is based on recurrence relations for the $\Gamma_n^k$.

\subsection{System of linear equations for coefficients}\label{ssec:system}

Multiplying both sides of \eqref{eq:proj1} by $\Delta_l$, $2\le l\le n$, and taking expectations yields
\begin{gather*}
    \Ee \Delta_1 \Delta_l = \sum_{k=2}^n \Gamma_n^k \,\Ee \Delta_k \Delta_l,
    \quad 2\le l\le n.
\end{gather*}
Due to stationarity,
\begin{gather*}
    \Ee\Delta_k \Delta_l = \rho_{|l-k|}.
\end{gather*}
This leads to the following system of linear equations for the coefficients $\Gamma_n^k$, $k=2,\dots,n$:
\begin{gather}\label{eq:system}
    \rho_{l-1} = \sum_{k=2}^n \Gamma_n^k \rho_{|l-k|},
    \quad 2\le l\le n.
\end{gather}
We can solve this using Cramer's Rule,
\begin{gather*}
    \Gamma_n^k = \frac{\det A_k}{\det A},
\end{gather*}
where
\begin{align}\label{matrices}
A &=
\begin{pmatrix}
1          & \rho_1     & \rho_2     & \dots   & \rho_{k}    & \dots  & \rho_{n-2} \\
\rho_1     & 1          & \rho_1     & \dots   & \rho_{k-1}  & \dots  & \rho_{n-3} \\
\vdots     & \vdots     & \vdots     &         & \vdots      &        & \vdots     \\
\rho_{n-2} & \rho_{n-3} & \rho_{n-4} & \dots   & \rho_{n-k-1}& \dots  & 1
\end{pmatrix}
\intertext{and $A_k$ is the matrix $A$ with its $k$th column vector replaced by $(\rho_1, \dots, \rho_{n-1})^\top$:}\notag\\[-8pt]
\label{matrices-2}
A_k &=
\begin{pmatrix}
1          & \rho_1     & \rho_2     & \dots   & \smash{\overbracket[.6pt]{\;\rho_{1}\;}^{\mathclap{\text{$k$th column}}}}  & \dots  & \rho_{n-2} \\
\rho_1     & 1          & \rho_1     & \dots   & \rho_{2}  & \dots  & \rho_{n-3} \\
\vdots     & \vdots     & \vdots     &         & \vdots    &        & \vdots     \\
\rho_{n-2} & \rho_{n-3} & \rho_{n-4} & \dots   & \rho_{n-1}& \dots  & 1
\end{pmatrix}.
\end{align}

\begin{remark}
It is known that the finite-dimensional distributions of $B^H$ have a nonsingular covariance matrix; in particular, for any $0<t_1<\dots<t_n$, the values $B^H_{t_1},\dots,B^H_{t_n}$ are linearly independent, see \cite[Theorem~1.1]{book-approx} and its proof. Obviously, a similar statement holds for fractional Gaussian noise, since the vector $(\Delta_1,\dots,\Delta_n)$ is a nonsingular linear transform of $(B^H_1,\dots, B^H_n)$. In other words, $\det A\ne0$; moreover, if $\sum_k\alpha_k\Delta_k=0$ a.s., then $\alpha_k=0$ for all $k$.
\end{remark}

\subsection{Relations between the values $\rho_k$}

In order to establish analytic properties of the coefficients $\Gamma_n^k$ we need several auxiliary results on the properties of the sequence $\set{\rho_k,\,k\in\integer_+}$. We start with a useful relation between $\rho_1$, $\rho_2$  and $\rho_3$.
\begin{lemma}\label{l:r1r2r3}
The following equality holds:
\begin{gather}\label{eq:rho2viarho3}
    \rho_2-\rho_1^2
    = \frac12(\rho_1-\rho_3).
\end{gather}
\end{lemma}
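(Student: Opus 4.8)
The plan is to verify the identity by direct substitution of the closed-form expressions for $\rho_1$, $\rho_2$ and $\rho_3$ coming from \eqref{eq:rho_k}, and then to reduce both sides to a common expression. Since all three indices are $\ge 1$, the absolute values in \eqref{eq:rho_k} are redundant, and I would write
\begin{align*}
    \rho_1 &= \tfrac12\left(2^{2H} - 2\right), \\
    \rho_2 &= \tfrac12\left(3^{2H} - 2\cdot 2^{2H} + 1\right), \\
    \rho_3 &= \tfrac12\left(4^{2H} - 2\cdot 3^{2H} + 2^{2H}\right).
\end{align*}

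First I would introduce the shorthand $a = 2^{2H}$ and $b = 3^{2H}$ to keep the algebra transparent. The crucial observation — and really the only point where the identity is not purely mechanical — is that $4^{2H} = \left(2^{2H}\right)^2 = a^2$, so that the quadratic term $\rho_1^2$ on the left-hand side produces exactly the $4^{2H}$ contribution that appears linearly inside $\rho_3$ on the right-hand side. With this in mind, $\rho_1 = \tfrac{a}{2} - 1$ gives $\rho_1^2 = \tfrac{a^2}{4} - a + 1$, while $\rho_2 = \tfrac12(b - 2a + 1)$ and $\rho_3 = \tfrac12(a^2 - 2b + a)$.

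Next I would compute each side separately and check that they agree. For the left-hand side,
\begin{gather*}
    \rho_2 - \rho_1^2 = \tfrac12(b - 2a + 1) - \left(\tfrac{a^2}{4} - a + 1\right) = \tfrac{b}{2} - \tfrac{a^2}{4} - \tfrac12,
\end{gather*}
and for the right-hand side,
\begin{gather*}
    \tfrac12(\rho_1 - \rho_3) = \tfrac12\left[\left(\tfrac{a}{2} - 1\right) - \tfrac12\left(a^2 - 2b + a\right)\right] = \tfrac{b}{2} - \tfrac{a^2}{4} - \tfrac12.
\end{gather*}
Since both expressions coincide, the claim follows. I do not anticipate any genuine obstacle: the statement is an algebraic identity in the two quantities $2^{2H}$ and $3^{2H}$, and the verification is essentially one line once the relation $4^{2H} = \left(2^{2H}\right)^2$ is exploited. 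The only care needed is bookkeeping of the constant terms and the cancellation of the terms linear in $a$. (An equivalent route, if one prefers to avoid reintroducing $a$ and $b$, is to eliminate $2^{2H}$ and $3^{2H}$ in favour of $\rho_1$ and $\rho_2$, which yields the equivalent relation $\rho_3 = 2\rho_1^2 - 2\rho_2 + \rho_1$.)
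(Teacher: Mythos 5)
Your proof is correct: with $a=2^{2H}$, $b=3^{2H}$ both sides indeed reduce to $\tfrac{b}{2}-\tfrac{a^2}{4}-\tfrac12$, and your observation that $4^{2H}=\bigl(2^{2H}\bigr)^2$ is the only non-mechanical ingredient. However, your route is genuinely different from the paper's. The paper does not substitute the closed-form expressions for $\rho_2$ and $\rho_3$ at all; it argues probabilistically, using the self-similarity of fBm together with the stationarity of its increments: scaling time by $2$ gives $2^{2H}\Ee B^H_1(B^H_2-B^H_1) = \Ee B^H_2(B^H_4-B^H_2)$, and expanding the right-hand side in increments yields $2^{2H}\rho_1 = \rho_1 + 2\rho_2 + \rho_3$; combining this with $2^{2H}=2\rho_1+2$ (the only closed form used) gives \eqref{eq:rho2viarho3}. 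The two arguments rest on the same underlying fact in different guises: your algebraic relation $4^{2H}=\bigl(2^{2H}\bigr)^2$ is precisely what self-similarity under time-scaling by $2$ encodes. What your computation buys is brevity and complete elementariness --- it needs nothing about fGn beyond formula \eqref{eq:rho_k} and works as a one-line verification. What the paper's argument buys is structural insight: it exhibits the identity as a consequence of scaling plus stationarity, which explains where it comes from and suggests how analogous relations (e.g.\ from scaling by other integers) could be generated, and it never has to manipulate the $3^{2H}$ terms explicitly.
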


\begin{proof}
Using the self-similarity of fBm and the stationarity of its increments, we get
\begin{align*}
    2^{2H}\rho_1
    &= 2^{2H}\Ee B^H_1(B^H_2-B^H_1) \\
    &= \Ee B^H_2(B^H_4-B^H_2) \\
    &= \Ee (B^H_2-B^H_1)(B^H_4-B^H_2) + \Ee B^H_1(B^H_4-B^H_2)\\
    &= \Ee\Delta_2(\Delta_3+\Delta_4) + \Ee\Delta_1(\Delta_3+\Delta_4)\\
    &= \rho_1 + 2\rho_2 + \rho_3.
\end{align*}
Note that by \eqref{eq:rho_k}, $\rho_1=2^{2H-1}-1$, whence $2^{2H} = 2 \rho_1+2$. Thus we arrive at
\begin{gather*}
    (2\rho_1 + 2)\rho_1 = \rho_1 + 2\rho_2 + \rho_3.
\end{gather*}
which is equivalent to \eqref{eq:rho2viarho3}.
\end{proof}

\begin{remark}
The inequality $\rho_1^2<\rho_2$ was proved in \cite[p.~28]{risks} by analytic methods. In this paper we improve this result in two directions: we obtain an explicit expression for $\rho_2-\rho_1^2$ and we prove the sharper bound $\rho_1^2<\rho_3$, see Lemma \ref{l:r12r3} below.
\end{remark}

Many important properties of the covariance function of a fractional Gaussian noise (such as monotonicity, convexity and log-convexity) follow from the more general property of complete monotonicity, which is stated in the next lemma. To formulate it, let us introduce the function
\begin{gather*}
    \rho(x)
    = \rho(H, x)
    = \frac12 \left(|x+1|^{2H} - 2|x|^{2H} + |x-1|^{2H}\right),\quad x\ge 0.
\end{gather*}

\begin{lemma}\label{l:cm}
\leavevmode
\begin{enumerate}
\item\label{l:cm-1}
    The function $\rho\colon(0,\infty)\to\real$ is convex if $H>1/2$ and concave if $H< 1/2$.
\item\label{l:cm-2}
    If $H>1/2$, then the function $\rho$ is completely monotone \textup{(}CM\textup{)} on $(1,\infty)$, that is, $\rho\in C^\infty(1,\infty)$ and
    \begin{gather}\label{eq:cm-def}
        (-1)^n \rho^{(n)}(x) \ge 0
        \quad\text{for all\ } n\in\nat\cup\{0\}
        \text{\ and\ } x>1.
    \end{gather}
\item\label{l:cm-3}
    If $H<1/2$, then the function $-\rho$ is completely monotone on $(1,\infty)$.
\end{enumerate}
\end{lemma}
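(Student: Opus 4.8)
The plan is to prove all three parts from a single analytic representation of $\rho$ obtained by writing the second-difference operator as an integral.

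\textbf{Reduction via the second derivative.} For $x>1$ the three points $x-1,x,x+1$ are all strictly positive, so on $(1,\infty)$ we simply have $\rho(x)=\tfrac12\bigl((x+1)^{2H}-2x^{2H}+(x-1)^{2H}\bigr)$ with no absolute-value issues. The cleanest route is to recognise the bracket as a second difference and to write it as a double integral of the second derivative of $t\mapsto t^{2H}$. Setting $g(t)=t^{2H}$, one has
\begin{gather*}
    \rho(x)
    = \tfrac12\int_0^1\!\!\int_0^1 g''(x+u-v)\,du\,dv
    = H(2H-1)\int_0^1\!\!\int_0^1 (x+u-v)^{2H-2}\,du\,dv,
\end{gather*}
valid for $x>1$ since the argument $x+u-v$ stays in $(0,\infty)$ throughout. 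This representation is the workhorse for everything that follows.

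\textbf{Parts \ref{l:cm-2} and \ref{l:cm-3}.} Differentiating under the integral sign $n$ times (justified because for $x>1$ the integrand and all its $x$-derivatives are bounded uniformly in $u,v\in[0,1]$) gives
\begin{gather*}
    \rho^{(n)}(x)
    = H(2H-1)\,(2H-2)(2H-3)\cdots(2H-1-n)
      \int_0^1\!\!\int_0^1 (x+u-v)^{2H-2-n}\,du\,dv.
\end{gather*}
The integral is strictly positive. The sign of $\rho^{(n)}(x)$ is therefore the sign of the prefactor $H(2H-1)\prod_{j=0}^{n}(2H-1-j)$. For $H>1/2$ the factor $H(2H-1)>0$, while each factor $2H-1-j$ for $j\ge1$ is negative (since $2H-1<1\le j$), so the product of the $n$ trailing factors carries the sign $(-1)^n$; hence $(-1)^n\rho^{(n)}(x)\ge0$, which is exactly \eqref{eq:cm-def}. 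For $H<1/2$ the leading factor $H(2H-1)<0$ flips every sign, giving $(-1)^n(-\rho)^{(n)}(x)\ge0$, i.e.\ $-\rho$ is CM. This handles both monotonicity statements at once; the only care needed is bookkeeping of the signs of the falling-factorial prefactor.

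\textbf{Part \ref{l:cm-1}.} Convexity/concavity on all of $(0,\infty)$ is the delicate part, because the integral representation above was only justified for $x>1$, where $g''$ does not meet the singularity at $t=0$. For $0<x<1$ the middle term $2|x|^{2H}$ and the term $|x-1|^{2H}$ interact with the kink of $t\mapsto|t|^{2H}$ at the origin, so I cannot blindly differentiate. The plan is to compute $\rho''$ directly on the open intervals $(0,1)$ and $(1,\infty)$ where $\rho$ is smooth, check the sign on each piece, and then argue that no concavity is introduced at the join $x=1$. On $(1,\infty)$ the case $n=2$ of the formula above already gives $\rho''(x)=H(2H-1)(2H-2)\int\!\!\int(x+u-v)^{2H-4}$, whose sign is that of $(2H-1)(2H-2)>0$ for $H>1/2$ and $<0$ for $H<1/2$, so $\rho$ is convex (resp.\ concave) there. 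On $(0,1)$ one differentiates the explicit expression $\tfrac12((x+1)^{2H}-2x^{2H}+(1-x)^{2H})$ to get $\rho''(x)=H(2H-1)\bigl((x+1)^{2H-2}-2x^{2H-2}+(1-x)^{2H-2}\bigr)$, and the sign again reduces to that of the bracket. The main obstacle is showing this bracket has the right sign on $(0,1)$: for $H>1/2$ one needs it nonnegative, and since $H(2H-1)>0$ this amounts to $(x+1)^{2H-2}+(1-x)^{2H-2}\ge 2x^{2H-2}$, i.e.\ convexity of $t\mapsto t^{2H-2}$ evaluated as a second difference -- which holds because the exponent $2H-2\in(-1,0)$ makes $t\mapsto t^{2H-2}$ convex, while the spacing is symmetric about $x$. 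For $H<1/2$ the identical computation with $H(2H-1)<0$ and the same convexity of the power function yields $\rho''<0$. Finally, $\rho$ is continuous at $x=1$ with matching one-sided behaviour, and convexity on the two half-open pieces together with continuity propagates to convexity on the whole of $(0,\infty)$; I would note that the left derivative does not exceed the right derivative at $x=1$ to rule out a downward kink, after which the piecewise bound suffices.
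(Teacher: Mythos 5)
Your treatment of parts \ref{l:cm-2} and \ref{l:cm-3} is correct, and it is essentially the paper's own argument: both proofs rest on writing $\rho$ as an integral mixture of the kernels $(x+\cdot)^{2H-2}$ (after the substitution $s=u-v$ your $\int_0^1\int_0^1(x+u-v)^{2H-2}\,du\,dv$ is literally the paper's $\int_0^1\int_{-t}^t|x+s|^{2H-2}\,ds\,dt$ from \eqref{eq:rho}, both equal to $\int_{-1}^1(1-|s|)(x+s)^{2H-2}\,ds$). The paper then cites the fact that completely monotone functions form a convex cone closed under pointwise limits, whereas you differentiate under the integral sign and track signs explicitly; that is a fair, more hands-on substitute. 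Two bookkeeping slips, neither fatal: the prefactor is $H(2H-1)\prod_{j=1}^{n}(2H-1-j)$, not $\prod_{j=0}^{n}$; and in your $n=2$ display you dropped the factor $(2H-3)$ and then asserted $(2H-1)(2H-2)>0$ for $H>\tfrac12$, which is false on $(\tfrac12,1)$ --- it is the full product $H(2H-1)(2H-2)(2H-3)$ that is positive there, consistent with your general formula.

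Part \ref{l:cm-1} is where the genuine gap lies, and it cannot be closed, because the claim itself fails on $(0,1)$. Your key inequality $(x+1)^{2H-2}+(1-x)^{2H-2}\ge 2x^{2H-2}$ is false for small $x$: since $2H-2\in(-1,0)$, the right-hand side blows up as $x\downarrow0$ while the left-hand side tends to $2$. The justification offered is also wrong on its face: the three points in play are $1-x$, $x$, $x+1$, and the outer two are symmetric about $1$, not about $x$, so convexity of $t\mapsto t^{2H-2}$ only yields $(x+1)^{2H-2}+(1-x)^{2H-2}\ge 2$. In fact $\rho''(x)=H(2H-1)\bigl((x+1)^{2H-2}-2x^{2H-2}+(1-x)^{2H-2}\bigr)\to-\infty$ as $x\downarrow0$ when $H>\tfrac12$, so $\rho$ is strictly concave near the origin; numerically, for $H=\tfrac34$ one has $\rho(0)=1$, $\rho(0.1)\approx0.9721$, $\rho(0.2)\approx0.9256$, and $\rho(0.1)>\tfrac12\bigl(\rho(0)+\rho(0.2)\bigr)\approx0.9628$, violating midpoint convexity. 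The mirror-image computation defeats the concavity claim for $H<\tfrac12$, and your gluing step at $x=1$ is therefore moot. Your instinct that $(0,1)$ is ``the delicate part'' was exactly right: the delicate part is actually false.

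For what it is worth, the paper's own proof of part \ref{l:cm-1} has the same defect: it asserts that $x\mapsto|x+s|^{2H-2}$ is convex, but for $2H-2\in(-1,0)$ this function is convex only on either side of its integrable pole at $x=-s$, not across it, and for $x\in(0,1)$ the mixture \eqref{eq:rho} integrates across these poles. So both arguments really establish convexity only on $(1,\infty)$, where it is in any case subsumed by complete monotonicity (part \ref{l:cm-2}). The only downstream use of part \ref{l:cm-1} beyond that range is the case $k=1$ of the convexity inequality in Corollary \ref{c:rho-properties}, i.e.\ $\rho_0-2\rho_1+\rho_2>0$; this concerns the integer points $0,1,2$ only and needs a separate check, e.g.\ via the closed form $\rho_0-2\rho_1+\rho_2=\tfrac72+\tfrac12\cdot 9^{H}-2\cdot 4^{H}$, which is indeed positive for $H\in(\tfrac12,1)$.
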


\begin{proof}
\ref{l:cm-1} Using the elementary relation $\frac d{dx}|x|^{2H} = 2H \sgn(x) |x|^{2H-1}$ it is not hard to see that
\begin{align}
    \rho(x)
    &=\notag \frac12\left( |x+1|^{2H} - |x|^{2H}\right) - \frac12\left( |x|^{2H} - |x-1|^{2H} \right)\\
    &=\label{eq:rho}H(2H-1)\int_0^1 \int_{-t}^t |x+s|^{2H-2}\,ds\,dt.
\end{align}
Since $x\mapsto |x+s|^{2H-2}$ is convex, and since convex functions are a convex cone which is closed under pointwise convergence, the double integral appearing in the representation of $\rho(x)$ is again convex. Thus $\rho(x)$ is convex or concave according to $2H-1>0$ or $2H-1<0$, respectively.

\medskip\noindent
\ref{l:cm-2} Let $H>\frac12$ and $x\ge 1$. Then the formula \eqref{eq:rho} remains valid if we replace $|x+s|$ with $(x+s)$. But $(x+s)^{2H-2}$ is CM and so $\rho(x)$ is an integral mixture of CM-functions. Since CM is a convex cone which is closed under pointwise convergence, cf.\ \cite[Cor.~1.6]{ssv2012}, we see that $\rho$ is CM on $(1,\infty)$.

\medskip\noindent
\ref{l:cm-3} The above arguments holds true in the case $H<\frac12$; the only difference is that in this case the factor $(2H-1)$ is negative.
\end{proof}

\begin{remark}
1.
Since $x\mapsto \rho(x+1)$ is a CM-function on $(0.\infty)$, it admits the representation $\rho(x+1) = a+ \int_0^\infty e^{-xt} \,\mu(dt)$, for some positive measure $\mu$ on $[0,\infty)$ and $a\ge0 $, see e.g.\ \cite[Th.~1.4]{ssv2012}. Taking into account that $\rho(+\infty)=0$, it is not hard to see that $a=0$, i.e.
\begin{gather}\label{eq:bernstein-repr}
    \rho(x+1) =  \int_0^\infty e^{-xt} \mu(dt).
\end{gather}

\medskip\noindent
2.
The function $\rho$ can be represented in the form $\rho(x+1) =  \Delta_1^2 f_H(x)$, where $ \Delta_1 f(x) := f(x+1) - f(x)$ is the step-1 difference operator, and $f_H(x) := \frac12 x^{2H}$. Then the second statement of Lemma~\ref{l:cm} follows from the more general result: If $f$ is CM on $(0,\infty)$, then $\Delta_2^1 f$ is CM. Indeed, since CM is a closed convex cone, it is enough to verify the
claim for the \enquote{basic} CM-function $f(x) = e^{-tx}$ where $t\geq 0$ is a parameter. Now we have
\begin{gather*}
    \Delta_1^2 f(x)
    = e^{-(x+2)t} - 2e^{-(x+1)t} + e^{-xt}
    = e^{-xt} (e^{-t}-1)^2,
\end{gather*}
and this is clearly a completely monotone function.

\medskip\noindent
3.
The argument which we used in the proof of Lemma~\ref{l:cm}.\ref{l:cm-2} proves a bit more: The function $x\mapsto\rho(H,x)$ is for $x\geq 1$ and $H>1/2$ even a Stieltjes function, i.e.\ a double Laplace transform. To see this, we note that the kernel $x\mapsto (x+s)^{2H-2}$ is a Stieltjes function. Further details on Stieltjes functions can be found in \cite{ssv2012}.
\end{remark}

As for the following properties, fractional Brownian with Hurst index $H=1$ is degenerate, i.e.\ $B^1_t= t\xi$, where $\xi\sim\Normal(0,1)$; consequently  all $\rho_k=1$ and, the next set of inequalities are equalities. Therefore we consider only $1/2<H<1$.

\begin{corollary}\label{c:rho-properties}
    Let $H\in(\frac12,1)$. The sequence $\{\rho_k,k\ge0\}$ has the following properties
    \begin{enumerate}
    \item\label{c:rho-properties-1}
    Monotonicity and positivity: for any $k\in\nat$
    \begin{gather}\label{eq:rho-monot}
        \rho_{k-1} >\rho_{k} >0.
    \end{gather}

    \item\label{c:rho-properties-2}
    Convexity: for any $k\in\nat$
    \begin{gather}\label{eq:rho-conv}
        \rho_{k-1} - \rho_k > \rho_k - \rho_{k+1}.
    \end{gather}

    \item\label{c:rho-properties-3}
    Log-Convexity: for any $k\in\nat$
    \begin{gather}\label{eq:rho-logconv}
        \rho_{k-1} \rho_{k+1} > \rho_k^2.
    \end{gather}
    \end{enumerate}
\end{corollary}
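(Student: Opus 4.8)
The plan is to transfer the three continuous properties established in Lemma~\ref{l:cm} to the discrete sequence $\rho_k=\rho(k)$. Two tools do almost all the work. First, the proof of Lemma~\ref{l:cm}.\ref{l:cm-1} in fact yields \emph{strict} convexity of $\rho$ on $(0,\infty)$: the integrand $x\mapsto|x+s|^{2H-2}$ in \eqref{eq:rho} has second derivative $(2H-2)(2H-3)|x+s|^{2H-4}>0$ wherever $x+s\neq0$, so the integral mixture is strictly convex. Second, for $H>1/2$ Lemma~\ref{l:cm}.\ref{l:cm-2} gives the Bernstein representation \eqref{eq:bernstein-repr}, which at integer arguments reads
\begin{gather*}
    \rho_k=\rho\big((k-1)+1\big)=\int_0^\infty e^{-(k-1)t}\,\mu(dt),\qquad k\ge1,
\end{gather*}
with $\mu$ a nonzero positive measure on $[0,\infty)$; here $\mu(\{0\})=0$ because $\rho(+\infty)=0$, and $\mu$ is not a single atom because $\rho$ decays polynomially rather than exponentially.

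Granting these, the convexity statement \eqref{eq:rho-conv} is immediate for every $k\ge1$: strict convexity of $\rho$ applied to the midpoint of $[k-1,k+1]$ gives $2\rho_k<\rho_{k-1}+\rho_{k+1}$. The representation handles the remaining claims for $k\ge2$. Positivity $\rho_k>0$ holds for all $k\ge1$, since the integrand is positive and $\mu\neq0$. For monotonicity,
\begin{gather*}
    \rho_{k-1}-\rho_k=\int_0^\infty e^{-(k-2)t}\big(1-e^{-t}\big)\,\mu(dt)>0,\qquad k\ge2,
\end{gather*}
the integrand being strictly positive on $(0,\infty)$. For log-convexity I would write $e^{-(k-1)t}=e^{-(k-2)t/2}e^{-kt/2}$ and apply the Cauchy--Schwarz inequality, obtaining $\rho_k^2\le\rho_{k-1}\rho_{k+1}$ for $k\ge2$, with strict inequality because $\mu$ is not concentrated at a single point (equality in Cauchy--Schwarz would force $\rho$ to be a pure exponential).

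It remains to treat the boundary index $k=1$, where $\rho_0=\rho(0)$ lies outside the region $(1,\infty)$ covered by complete monotonicity, so the representation above does not apply. For monotonicity I would argue directly from the closed form $\rho_1=2^{2H-1}-1$ recorded in the proof of Lemma~\ref{l:r1r2r3}: since $H<1$ we have $2^{2H-1}<2$, hence $\rho_1<1=\rho_0$. For log-convexity at $k=1$ the claim is $\rho_0\rho_2=\rho_2>\rho_1^2$, which is exactly the content of Lemma~\ref{l:r1r2r3}: it gives $\rho_2-\rho_1^2=\tfrac12(\rho_1-\rho_3)$, and $\rho_1>\rho_3$ by the monotonicity already proved, so the right-hand side is positive.

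The genuine obstacle, and the only place requiring care, is precisely this boundary value $k=1$: because $\rho$ is completely monotone only on $(1,\infty)$, the uniform Laplace-transform argument breaks down for any inequality involving $\rho_0$, and each such case must be closed by hand (here via the explicit value of $\rho_1$ and via Lemma~\ref{l:r1r2r3}). A secondary point deserving attention is \emph{strictness} throughout: all three parts are strict inequalities, so at each step I must rule out degeneracy of $\mu$ — that it charges $(0,\infty)$, assigns no mass to $\{0\}$, and is not a single atom — which follows from $\rho$ being non-trivial, vanishing at infinity, and decaying only polynomially.
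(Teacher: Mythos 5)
Your treatment of positivity, monotonicity, and log-convexity coincides in substance with the paper's proof: both rest on the Bernstein representation \eqref{eq:bernstein-repr} of the completely monotone function $\rho(\cdot+1)$ (you apply Cauchy--Schwarz directly to the integrals at integer arguments, the paper applies it to $\rho,\rho',\rho''$ --- the same idea), and both close the boundary case $k=1$ by the explicit value $\rho_1=2^{2H-1}-1$ and by Lemma~\ref{l:r1r2r3}. Your extra care about strictness ($\mu\neq0$, $\mu(\{0\})=0$, $\mu$ not a single atom) is correct and tightens points the paper passes over silently.

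The genuine gap is your claim that $\rho$ is (strictly) convex on $(0,\infty)$, which is exactly what you need for the case $k=1$ of \eqref{eq:rho-conv}. The claim is false, and so is the argument for it. For fixed $s\in(-1,0)$ the integrand $x\mapsto|x+s|^{2H-2}$ in \eqref{eq:rho} has a non-removable pole at $x=-s\in(0,1)$; it has positive second derivative on either side of the pole, but it is not convex on any interval containing the pole: with $f(u)=|u|^{2H-2}$ and $2H-2\in(-1,0)$ one has $f(\delta)=\delta^{2H-2}>\tfrac12\bigl(f(-\delta)+f(3\delta)\bigr)=\tfrac12\delta^{2H-2}\bigl(1+3^{2H-2}\bigr)$, violating midpoint convexity. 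A positive second derivative away from a singularity does not propagate convexity across it, and indeed the mixture itself is not convex near the origin: for $0<x<1$,
\begin{gather*}
    \rho''(x)=H(2H-1)\left((x+1)^{2H-2}-2x^{2H-2}+(1-x)^{2H-2}\right)\longrightarrow-\infty,
    \quad x\downarrow0,
\end{gather*}
so $\rho$ is strictly \emph{concave} near $0$; numerically, for $H=3/4$ one finds $\rho(1/2)\approx0.742>\tfrac12\bigl(\rho(0)+\rho(1)\bigr)\approx0.707$. Hence your midpoint argument on $[0,2]$ proves nothing. (You have in fact reproduced a defect of the paper itself: Lemma~\ref{l:cm}.\ref{l:cm-1} and the phrase ``convexity even for $k\ge1$'' in the paper's proof of the Corollary lean on the same false statement; only convexity on $[1,\infty)$, which follows from complete monotonicity, is actually available.)

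The case $k=1$ of \eqref{eq:rho-conv} therefore requires a separate argument, and one is available from tools you already invoke: by Lemma~\ref{l:r1r2r3} and the monotonicity $\rho_1>\rho_2>\rho_3$, which your representation argument already yields for indices $\ge1$,
\begin{gather*}
    \rho_0-2\rho_1+\rho_2
    =(1-\rho_1)^2+\left(\rho_2-\rho_1^2\right)
    =(1-\rho_1)^2+\tfrac12\left(\rho_1-\rho_3\right)>0 .
\end{gather*}
With this patch your proposal becomes a complete proof; for $k\ge2$ your midpoint argument is sound, since $\rho''>0$ on $(1,\infty)$ there.
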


\begin{proof}
By Lemma \ref{l:cm}, the function $\rho(x)$ is convex on $(0,\infty)$ and completely monotone on $(1,\infty)$; by continunity we can include the endpoints of each interval.

We begin with the observation that a completely monotone function is automatically log-convex. We show this for $\rho$ using the representation \eqref{eq:bernstein-repr}: For any $x\geq 0$
\begin{gather*}
    \rho(x+1) =  \int_0^\infty e^{-xt} \mu(dt),\quad
    \rho'(x+1) = - \int_0^\infty e^{-xt} t \mu(dt),\\
    \rho''(x+1) = \int_0^\infty e^{-xt} t^2 \mu(dt).
\end{gather*}
Thus, the Cauchy--Schwarz inequality yields
\begin{gather}
    \left (\rho'(x)\right )^2 \leq \rho(x) \cdot \rho''(x)
\end{gather}
which guarantees that $x\mapsto \log\rho(x+1)$ is convex.

Therefore all properties claimed in the statement hold for $k\ge2$, convexity even for $k\geq 1$, and we only have to deal with the case $k=1$.

\medskip\noindent\emph{Monotonicity for $k=1$}:
We have to show $\rho_0>\rho_1$. This follows by direct verification, since by \eqref{eq:rho_k},
\begin{gather}\label{eq:g0g1}
    \rho_0=1 \quad\text{and}\quad\rho_1=2^{2H-1}-1
\end{gather}
(recall that $1/2<H<1$).

\medskip\noindent\emph{Log-convexity for $k=1$}:
In this case, the inequality \eqref{eq:rho-logconv} has the form $\rho_2>\rho_1^2$. It immediately follows from the representation \eqref{eq:rho2viarho3} combined with the monotonicity property~\eqref{eq:rho-monot}.
\end{proof}

The previous lemma implies that $\rho_1^2<\rho_2$. The following result gives a sharper bound.
\begin{lemma}\label{l:r12r3}
    If $H\in(\frac12,1)$, then
    \begin{gather}\label{eq:r12lessr3}
        \rho_1^2<\rho_3.
    \end{gather}
\end{lemma}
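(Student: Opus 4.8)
The plan is to reduce the inequality to the positivity of a single elementary function and then to recognise that function as a fourth-order finite difference with a definite sign. Writing $t=2H\in(1,2)$ and using $\rho_1=2^{t-1}-1$ together with $\rho_3=\frac12\left(4^{t}-2\cdot3^{t}+2^{t}\right)$ (both read off from \eqref{eq:rho_k}), a direct computation gives
\begin{gather*}
    \rho_3-\rho_1^2=\tfrac14\left(4^{t}-4\cdot3^{t}+6\cdot2^{t}-4\right).
\end{gather*}
(The same identity follows from Lemma~\ref{l:r1r2r3}, which yields $\rho_3-\rho_1^2=\rho_1^2+\rho_1-2\rho_2$.) Hence \eqref{eq:r12lessr3} is equivalent to showing that $g(t):=4^{t}-4\cdot3^{t}+6\cdot2^{t}-4>0$ for $t\in(1,2)$. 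Note that $g(1)=g(2)=0$, so the endpoints are precisely the degenerate boundary values and the whole content of the lemma lies in the open interval.

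The key observation is that, with $f(s):=s^{t}$, the function $g$ is exactly the fourth forward difference $f(4)-4f(3)+6f(2)-4f(1)+f(0)$, since $f(0)=0$ and $f(1)=1$. I would then invoke the integral (Peano) representation of finite differences: for $\varepsilon>0$,
\begin{gather*}
    \sum_{k=0}^4(-1)^{4-k}\binom4k f(\varepsilon+k)
    =\int_0^1\!\!\cdots\!\int_0^1 f^{(4)}(\varepsilon+s_1+s_2+s_3+s_4)\,ds_1\cdots ds_4,
\end{gather*}
which is valid because $f\in C^\infty[\varepsilon,\varepsilon+4]$. Here $f^{(4)}(s)=t(t-1)(t-2)(t-3)\,s^{t-4}$, and for $t\in(1,2)$ the constant $t(t-1)(t-2)(t-3)$ is a product of two positive and two negative factors, hence strictly positive; therefore the integrand is strictly positive. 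Letting $\varepsilon\downarrow0$, the left-hand side converges to $g(t)$ by continuity of $f$ at $0$, while on the right $f^{(4)}(\varepsilon+\sum s_i)$ increases to $f^{(4)}(\sum s_i)$ (as $f^{(4)}$ is decreasing in $s$), so monotone convergence gives
\begin{gather*}
    g(t)=t(t-1)(t-2)(t-3)\int_{[0,1]^4}\Big(\textstyle\sum_{i=1}^4 s_i\Big)^{t-4}\,ds>0,
\end{gather*}
provided this integral is finite and positive.

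The main obstacle is exactly the singular endpoint $s=0$: $f^{(4)}$ is not integrable at $0$ on its own, so the representation cannot be applied directly at $\varepsilon=0$; this is why I pass through $\varepsilon>0$ and a monotone limit. Finiteness of the limiting integral is the one quantitative point to verify, and it holds because the four-dimensional volume element tames the singularity: writing $S=s_1+\dots+s_4$, the density of $S$ behaves like $S^3/6$ near the origin, so $\int_{[0,1]^4}S^{\,t-4}\,ds\sim\int_0 S^{\,t-4}\cdot S^3\,dS=\int_0 S^{\,t-1}\,dS$, which converges since $t-1>0$. With finiteness secured, strict positivity of the integrand forces $g(t)>0$, i.e.\ $\rho_1^2<\rho_3$, for all $t=2H\in(1,2)$. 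Should one prefer to avoid the improper integral, an alternative is to prove directly that $g$ is strictly concave on $[1,2]$: since $g''(t)=3^t\varphi(t)$ with $\varphi(t)=4(\ln2)^2(4/3)^t+6(\ln2)^2(2/3)^t-4(\ln3)^2$ convex, it suffices to check $\varphi(1)<0$ and $\varphi(2)<0$ via rational bounds on $\ln2$ and $\ln3$; concavity together with $g(1)=g(2)=0$ again yields $g>0$ on $(1,2)$.
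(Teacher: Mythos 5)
Your proposal is correct, and it takes a genuinely different route from the paper. The paper's proof is two lines: by \eqref{eq:rho2viarho3} one has $\rho_3-\rho_1^2=\frac12(\rho_1-2\rho_2+\rho_3)$, and this is positive by the convexity \eqref{eq:rho-conv} of the sequence $(\rho_k)$ from Corollary~\ref{c:rho-properties}. You instead substitute $t=2H$, reduce the claim to the elementary inequality $g(t)=4^{t}-4\cdot3^{t}+6\cdot2^{t}-4>0$ on $(1,2)$, identify $g(t)$ with the fourth forward difference of $f(s)=s^{t}$ at $0$, and obtain
\begin{gather*}
g(t)=t(t-1)(t-2)(t-3)\int_{[0,1]^4}\bigl(s_1+s_2+s_3+s_4\bigr)^{t-4}\,ds>0,
\end{gather*}
the constant being positive exactly for $t\in(1,2)$; your passage through $\varepsilon>0$ plus monotone convergence correctly handles the non-integrable singularity of $f^{(4)}$ at $0$ (and finiteness of the limiting integral is in fact forced by the finiteness of the left-hand limit $g(t)$, so your volume estimate is a sanity check rather than a necessity). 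The two proofs are close relatives at bottom: since $\rho(x+1)$ is itself a second difference of $\frac12 x^{2H}$ (see the remark following Lemma~\ref{l:cm}), the paper's quantity $\rho_1-2\rho_2+\rho_3$ equals $\frac12 g(2H)$, and the convexity it invokes is proved in Lemma~\ref{l:cm} by the analogous second-order integral representation \eqref{eq:rho}. What the paper's route buys is brevity, reusing machinery already in place (Lemma~\ref{l:r1r2r3} and Corollary~\ref{c:rho-properties}); what yours buys is a self-contained argument with an explicit positive integral formula for $\rho_3-\rho_1^2$ that lays the mechanism -- the sign of $2H(2H-1)(2H-2)(2H-3)$ -- bare. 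Your fallback via strict concavity of $g$ (with $g(1)=g(2)=0$) is also sound, but it ends in numerical bounds on $\ln 2$ and $\ln 3$ with rather small margins, so the finite-difference argument is the one to keep.
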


\begin{proof}
Applying \eqref{eq:rho2viarho3}, we may write
\begin{gather*}
    \rho_3 - \rho_1^2
    = \rho_3 - \rho_2 + \frac12(\rho_1-\rho_3)
    = \frac12(\rho_1 - 2\rho_2 + \rho_3)>0,
\end{gather*}
because of Statement~2 of Corollary~\ref{c:rho-properties}.
\end{proof}

\subsection{Particular cases}\label{ssec:partic}

We will now consider in detail two particular cases: $n=3$ and $n=4$. In these cases we prove the positivity of \emph{all} coefficients $\Gamma_n^k$, establish some relations between them, and study the asymptotic behaviour as $H\uparrow1$. In the case $n=3$ everything is established analytically while in the case $n=4$ the sign of the second coefficient $\Gamma_n^3$ and the relation between the second and the third coefficients, $\Gamma_n^3$ and $\Gamma_n^4$, are verified numerically.

\subsubsection{Case $n=3$}
In the case $n=3$, the system \eqref{eq:system} becomes
\begin{gather}\label{eq:syst-n3}
\left\{\begin{aligned}
\Gamma_3^2 + \Gamma_3^3 \rho_{1} &= \rho_{1} ,\\
\Gamma_3^2 \rho_{1} + \Gamma_3^3 &= \rho_{2} ,
\end{aligned}\right.
\end{gather}
whence
\begin{gather*}
\Gamma_3^2 = \frac{\rho_{1}(1-\rho_{2})}{1-\rho_{1}^2},
\quad
\Gamma_3^3 = \frac{\rho_{2}-\rho_{1}^2}{1-\rho_{1}^2}.
\end{gather*}

\begin{proposition} \label{prop:n3-monot}
For any $H\in(\frac12,1)$,
\begin{gather*}
    \Gamma_3^2> \Gamma_3^3>0.
\end{gather*}
\end{proposition}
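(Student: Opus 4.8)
The plan is to prove both inequalities directly from the explicit formulas
\begin{gather*}
    \Gamma_3^2 = \frac{\rho_{1}(1-\rho_{2})}{1-\rho_{1}^2},
    \qquad
    \Gamma_3^3 = \frac{\rho_{2}-\rho_{1}^2}{1-\rho_{1}^2},
\end{gather*}
using the sign information on the $\rho_k$ already collected in Corollary~\ref{c:rho-properties}. First I would record the common denominator $1-\rho_1^2$: since $0<\rho_1<\rho_0=1$ by the monotonicity and positivity property~\eqref{eq:rho-monot}, we have $0<\rho_1^2<1$, so $1-\rho_1^2>0$. Consequently the signs of the two coefficients are governed entirely by their numerators.

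Next I would establish $\Gamma_3^3>0$, which amounts to $\rho_2-\rho_1^2>0$. This is exactly the log-convexity inequality~\eqref{eq:rho-logconv} at $k=1$ (equivalently the statement $\rho_1^2<\rho_2$ noted just before Lemma~\ref{l:r12r3}), so the numerator is strictly positive and $\Gamma_3^3>0$ follows immediately. For $\Gamma_3^2>0$, the numerator is $\rho_1(1-\rho_2)$; here $\rho_1>0$ and $\rho_2<\rho_0=1$ again by~\eqref{eq:rho-monot}, so $1-\rho_2>0$ and the whole numerator is positive. Thus both coefficients are strictly positive.

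The remaining and slightly more delicate inequality is $\Gamma_3^2>\Gamma_3^3$. Since the denominators agree, it suffices to compare numerators, i.e.\ to show
\begin{gather*}
    \rho_1(1-\rho_2) > \rho_2-\rho_1^2,
\end{gather*}
which rearranges to $\rho_1+\rho_1^2 > \rho_2(1+\rho_1)$, and after dividing by $1+\rho_1>0$ reduces to the clean inequality $\rho_1>\rho_2$. This is once more a direct consequence of the strict monotonicity~\eqref{eq:rho-monot}. So the entire proposition collapses to three applications of the monotonicity/positivity and one application of the log-convexity of $\{\rho_k\}$.

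I do not expect a genuine obstacle here, since every ingredient is already available from Corollary~\ref{c:rho-properties}; the only point requiring a little care is the algebraic manipulation in the last step, where one must verify that the factor $1+\rho_1$ can be cancelled (it is positive) so that the difference of numerators reduces exactly to $(1+\rho_1)(\rho_1-\rho_2)$. Once that factorisation is made transparent, the strict inequality $\rho_1>\rho_2$ finishes the argument.
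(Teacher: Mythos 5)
Your proposal is correct and follows essentially the same route as the paper: both reduce $\Gamma_3^2>\Gamma_3^3$ to the factorisation $(1+\rho_1)(\rho_1-\rho_2)>0$ via monotonicity of $\{\rho_k\}$, and both obtain $\Gamma_3^3>0$ from the log-convexity inequality $\rho_2>\rho_1^2$ of Corollary~\ref{c:rho-properties}. Your extra remarks (positivity of the denominator $1-\rho_1^2$ and the direct check that $\Gamma_3^2>0$) are harmless additions that the paper leaves implicit.
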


\begin{proof}
Recall that, by Corollary~\ref{c:rho-properties} (Statement~1), $1>\rho_{1} > \rho_{2} > \dots$
Hence, the first inequality $\Gamma_3^2>\Gamma_3^3$ is equivalent to
\begin{gather*}
    \rho_{1}(1-\rho_{2})>\rho_{2}-\rho_{1}^2
\quad\text{or}\quad
    (1+\rho_1)(\rho_1-\rho_2)>0,
\end{gather*}
which is true due to Corollary~\ref{c:rho-properties}.

To prove the second inequality $\Gamma_3^3>0$, we need to show that $\rho_2>\rho_1^2$, which was established in Corollary~\ref{c:rho-properties}.
\end{proof}

\begin{remark}
It is worth pointing out that the positivity (and positive definiteness) of the coefficient matrix together with the positivity of the right-hand side of the system does not imply the positivity of solution. Indeed, consider the following system with the same coefficients as in  \eqref{eq:syst-n3}, but another positive right-hand side, say $(b_1,b_2)$:
\begin{gather*}
\begin{pmatrix}
1 & \rho_1 \\
\rho_1 & 1
\end{pmatrix}
\begin{pmatrix}
x_1 \\
x_2
\end{pmatrix}
=
\begin{pmatrix}
b_1 \\
b_2
\end{pmatrix}.
\end{gather*}
The solution has the form:
\begin{gather*}
    x_1 = \frac{b_1-b_2\rho_1}{1-\rho_1^2}, \quad
    x_2 = \frac{b_2-b_1\rho_1}{1-\rho_1^2}.
\end{gather*}
If, for example, $b_2<b_1\rho_1$, then $x_1>0$ and $x_2<0$.
For the system \eqref{eq:syst-n3}, this condition is written as $\rho_2<\rho_1^2$, contradicting Corollary~\ref{c:rho-properties}.
\end{remark}

\begin{proposition}\label{prop:n3-lim}
\begin{align*}
    \lim_{H\uparrow1} \Gamma_3^2
    &= \frac{9 \log 9-8 \log 4}{8 \log 4}
    \approx 0.783083,
\\
    \lim_{H\uparrow1} \Gamma_3^3
    &= \frac{8 \log 16 -9 \log 9}{8 \log 4}
    \approx 0.216917.
\end{align*}
\end{proposition}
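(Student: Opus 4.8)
The plan is to start from the explicit closed forms
\[
    \Gamma_3^2 = \frac{\rho_1(1-\rho_2)}{1-\rho_1^2},
    \qquad
    \Gamma_3^3 = \frac{\rho_2-\rho_1^2}{1-\rho_1^2}
\]
obtained above, and to substitute the elementary expressions $\rho_1 = 2^{2H-1}-1$ from \eqref{eq:g0g1} and $\rho_2 = \frac12(3^{2H}-2\cdot 2^{2H}+1)$ from \eqref{eq:rho_k}, which exhibit both coefficients as explicit, smooth functions of $H$ on a neighbourhood of $H=1$. First I would record the boundary values: as $H\uparrow 1$ one has $\rho_1\to 1$ and $\rho_2\to \frac12(9-8+1)=1$, so that the numerator and denominator of each fraction vanish simultaneously and we face a $0/0$ indeterminacy in both cases.

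The natural tool is a single application of L'Hôpital's rule (equivalently, a first-order Taylor expansion in $H$ about $H=1$). To carry this out I would compute
\[
    \rho_1'(H) = 2^{2H}\log 2,
    \qquad
    \rho_2'(H) = 3^{2H}\log 3 - 2\cdot 2^{2H}\log 2,
\]
and evaluate at $H=1$, giving $\rho_1'(1)=4\log 2$ and $\rho_2'(1)=9\log 3-8\log 2$. The common denominator $D(H)=1-\rho_1^2$ has $D'(H)=-2\rho_1\rho_1'$, hence $D'(1)=-8\log 2\neq 0$; this nonvanishing is exactly the condition that legitimises a single L'Hôpital step.

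For $\Gamma_3^3$ the numerator $N(H)=\rho_2-\rho_1^2$ satisfies $N'(1)=\rho_2'(1)-2\rho_1(1)\rho_1'(1)=9\log 3-16\log 2$, so that $\lim_{H\uparrow1}\Gamma_3^3 = N'(1)/D'(1) = (16\log 2-9\log 3)/(8\log 2)$; rewriting $\log 2$ and $\log 3$ through $\log 4$, $\log 9$, $\log 16$ gives the stated value. For $\Gamma_3^2$ the numerator $M(H)=\rho_1(1-\rho_2)$ has $M'(H)=\rho_1'(1-\rho_2)-\rho_1\rho_2'$; since $1-\rho_2(1)=0$ the first term drops out and $M'(1)=-\rho_2'(1)=8\log 2-9\log 3$, whence $\lim_{H\uparrow1}\Gamma_3^2 = (9\log 3-8\log 2)/(8\log 2)$, which is the claimed number. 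As a consistency check the two limits sum to $1$, in agreement with the convex-combination heuristic for $H=1$ discussed in Section~\ref{section1}.

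There is no genuine conceptual obstacle here: the only real work is the routine differentiation of the exponentials in $H$ and the translation of the resulting combinations of $\log 2$ and $\log 3$ into the symmetric form $\log 4,\log 9,\log 16$ used in the statement. The one point requiring care is verifying $D'(1)\neq 0$, so that a single L'Hôpital step indeed suffices; alternatively one may first cancel the factor $1+\rho_1\to 2$ in $1-\rho_1^2=(1-\rho_1)(1+\rho_1)$ and then differentiate, which slightly lightens the bookkeeping.
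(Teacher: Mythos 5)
Your proposal is correct and follows essentially the same route as the paper: express $\Gamma_3^2$ and $\Gamma_3^3$ explicitly via $\rho_1=2^{2H-1}-1$ and $\rho_2=\tfrac12(3^{2H}-2^{2H+1}+1)$, observe the $0/0$ indeterminacy as $H\uparrow1$, and resolve it with a single application of l'H\^opital's rule; your derivative computations and final values agree with the paper's (which writes them in terms of $\log 4$, $\log 9$, $\log 16$). The only cosmetic difference is that the paper first cancels the factor $1+\rho_1\to2$ from $1-\rho_1^2=(1-\rho_1)(1+\rho_1)$ before differentiating --- exactly the variant you mention at the end --- so nothing substantive separates the two arguments.
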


\begin{proof}
If we take the limit $H\uparrow 1$ in the relations
\begin{gather}\label{eq:rho12}
\rho_1= 2^{2H-1} - 1,
\quad
\rho_2= \frac12\left(3^{2H} - 2^{2H+1} + 1\right),
\end{gather}
we get
\begin{gather*}
\rho_1 \to 1,
\quad
\rho_2 \to 1,
\quad\text{as } H\uparrow1.
\end{gather*}
Therefore,
\begin{align*}
    \lim_{H\uparrow1}\Gamma_3^2
    &= \lim_{H\uparrow1}\frac{\rho_{1}(1-\rho_{2})}{(1-\rho_{1})(1+\rho_1)}
    =\lim_{H\uparrow1}\frac{1-\rho_{2}}{2(1-\rho_{1})}\\
    &=\lim_{H\uparrow1}\frac{1- \frac12\left(3^{2H} - 2^{2H+1} + 1\right)}{2(1-2^{2H-1} + 1)}\\
    &=\lim_{H\uparrow1}\frac{1- 3^{2H} + 2^{2H+1}}{4(2-2^{2H-1})}
    =\lim_{H\uparrow1}\frac{1- 9^{H} + 2\cdot 4^H}{8-2\cdot 4^H}.
\end{align*}
By l'H\^opital's rule,
\begin{gather*}
    \lim_{H\uparrow1}\Gamma_3^2
    =\lim_{H\uparrow1}\frac{-9^{H}\log 9 + 2\cdot 4^H\log 4}{-2\cdot 4^H\log4}
    =\frac{-9\log 9 + 8\log 4}{-8\log4}.
\end{gather*}
Similarly,
\begin{align*}
    \lim_{H\uparrow1}\Gamma_3^3
    &=\lim_{H\uparrow1} \frac{\rho_{2}-\rho_{1}^2}{1-\rho_{1}^2}
    =\lim_{H\uparrow1} \frac{\rho_{2}-\rho_{1}^2}{2(1-\rho_{1})}\\
    &=\lim_{H\uparrow1} \frac{\frac12\left(3^{2H} - 2^{2H+1} + 1\right)-(2^{2H-1} - 1)^2}{2(2-2^{2H-1})}\\
    &=\lim_{H\uparrow1} \frac{3^{2H} - 2^{2H+1} + 1-2(2^{4H-2} -2^{2H}+ 1)}{4(2-2^{2H-1})}\\
    &=\lim_{H\uparrow1} \frac{9^{H} - 1-\frac12 16^{H}}{8-2\cdot4^{H}}
    =\lim_{H\uparrow1} \frac{9^{H}\log9 -\frac12 16^{H}\log16}{-2\cdot4^{H}\log4}\\
    &=\frac{9\log9 - 8\log16}{-8\log4}.
\qedhere
\end{align*}
\end{proof}

Figure \ref{fig:n=3} shows the dependence of the coefficients $\Gamma_3^2$ and $\Gamma_3^3$ on $H$. It illustrates the theoretical results stated in Propositions~\ref{prop:n3-monot} and~\ref{prop:n3-lim}, in particular, positivity and monotonicity of the coefficients, and convergence to theoretical limit values as $H\uparrow1$.
\begin{figure}
    \centering
    \begin{minipage}{0.47\linewidth}
        \centering
        \includegraphics[width=\linewidth]{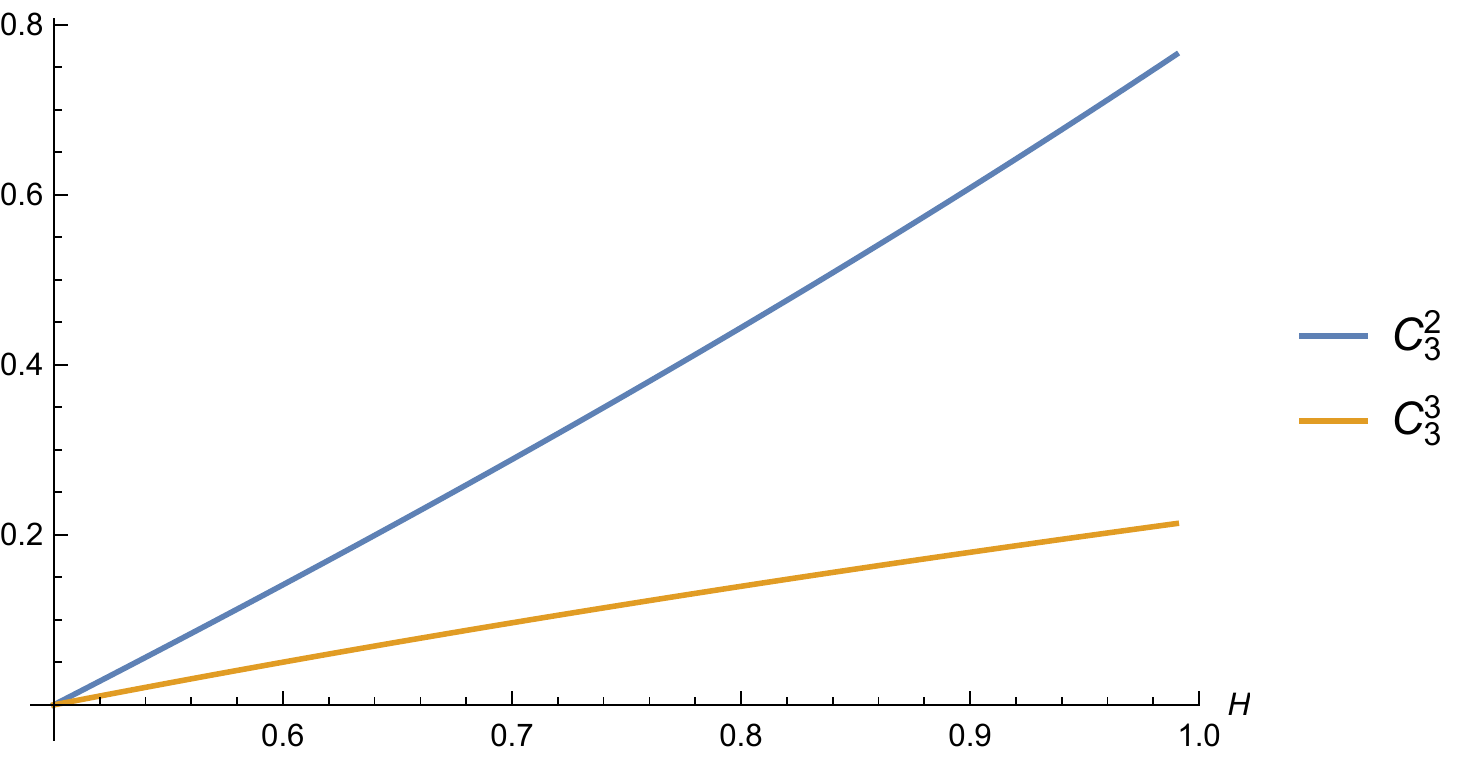} % first figure itself
        \caption{Case $n=3$: $\Gamma_3^2$ and $\Gamma_3^3$ as the functions of $H$.}\label{fig:n=3}
    \end{minipage}\hfill
    \begin{minipage}{0.47\linewidth}
        \centering
        \includegraphics[width=\linewidth]{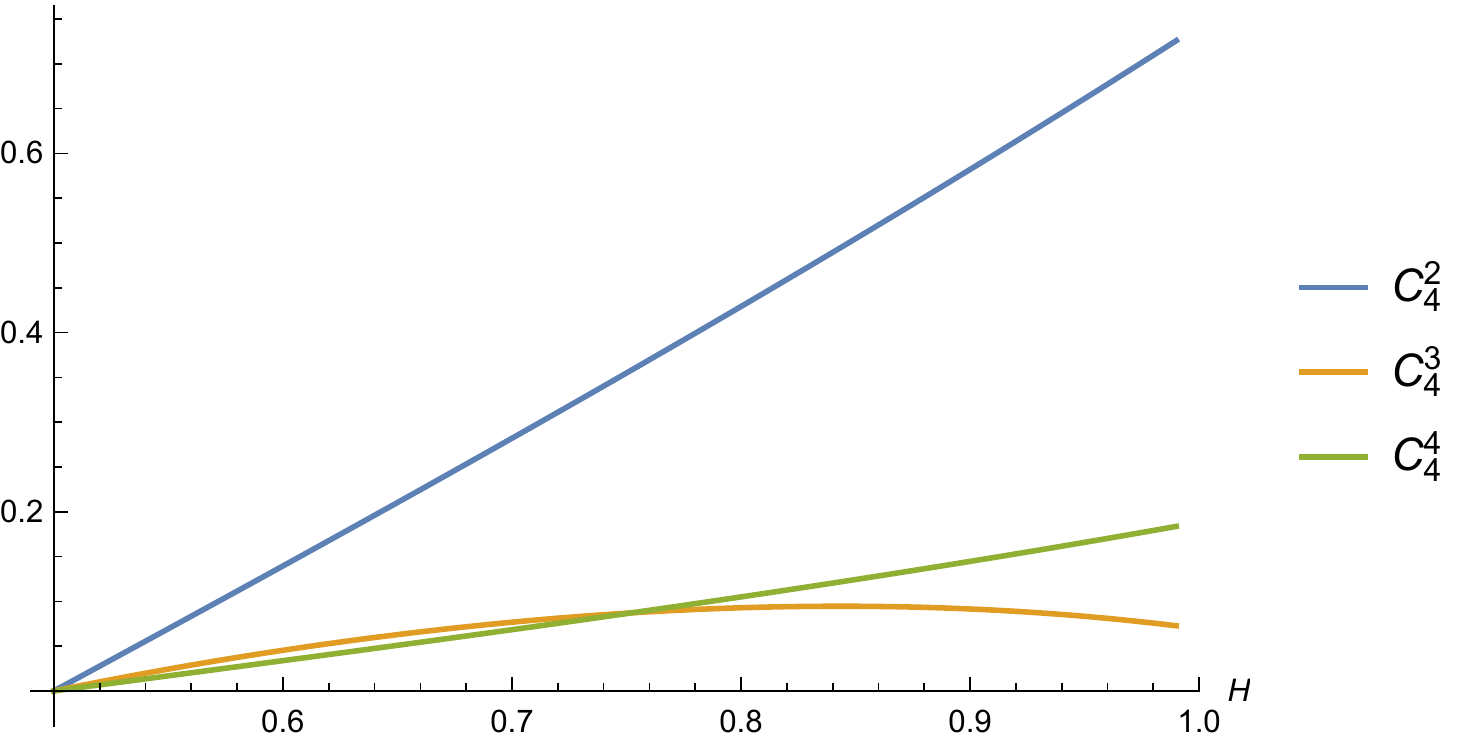} % second figure itself
        \caption{Case $n=4$: $\Gamma_4^2$, $\Gamma_4^3$, and $\Gamma_4^4$ as the functions of $H$.}\label{fig:n=4}
    \end{minipage}
    \\[12pt]
    \begin{minipage}{0.47\linewidth}
        \centering
        \includegraphics[width=\linewidth]{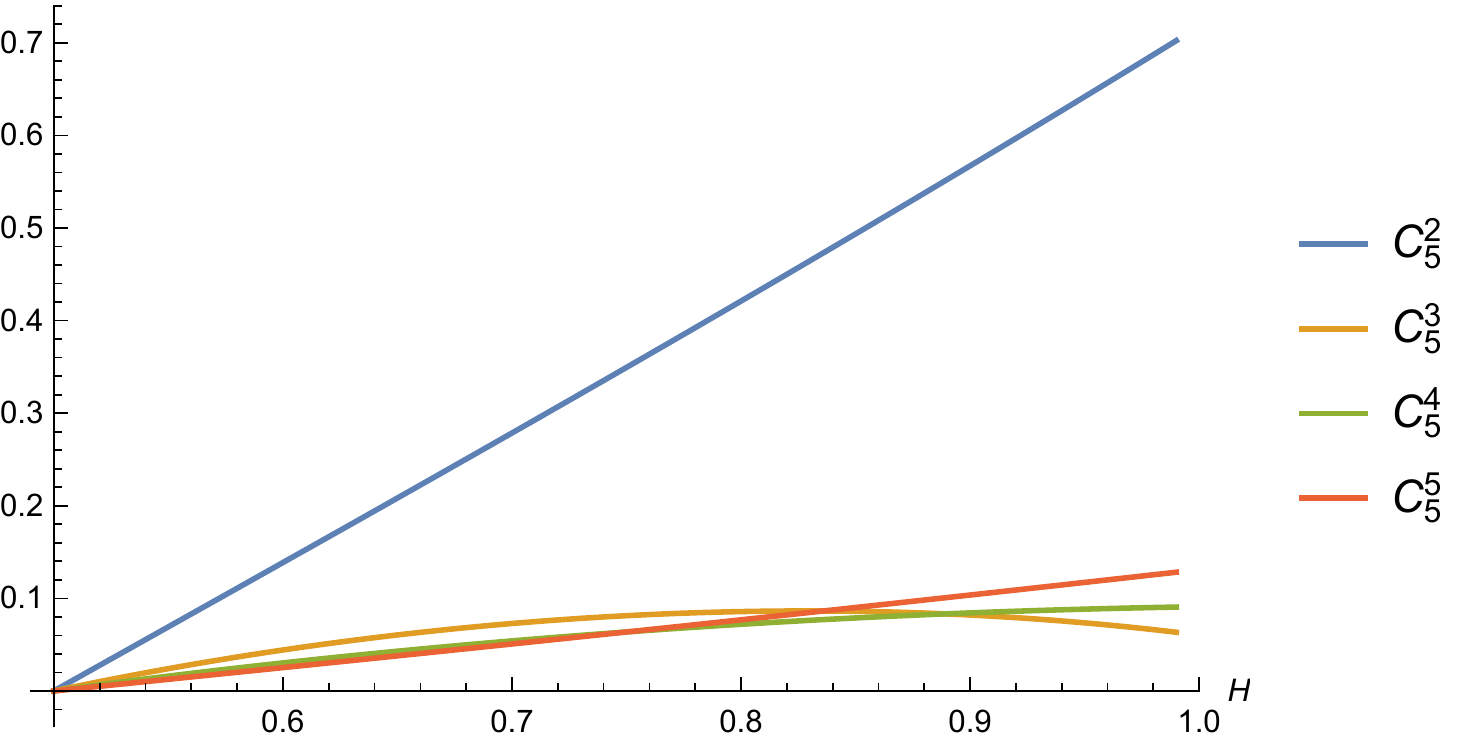} % first figure itself
        \caption{Case $n=5$: $\Gamma_5^2$, $\Gamma_5^3$, $\Gamma_5^4$, and $\Gamma_5^5$ depending on $H$.}\label{fig:n=5}
    \end{minipage}\hfill
    \begin{minipage}{0.47\linewidth}
        \centering
        \includegraphics[width=\linewidth]{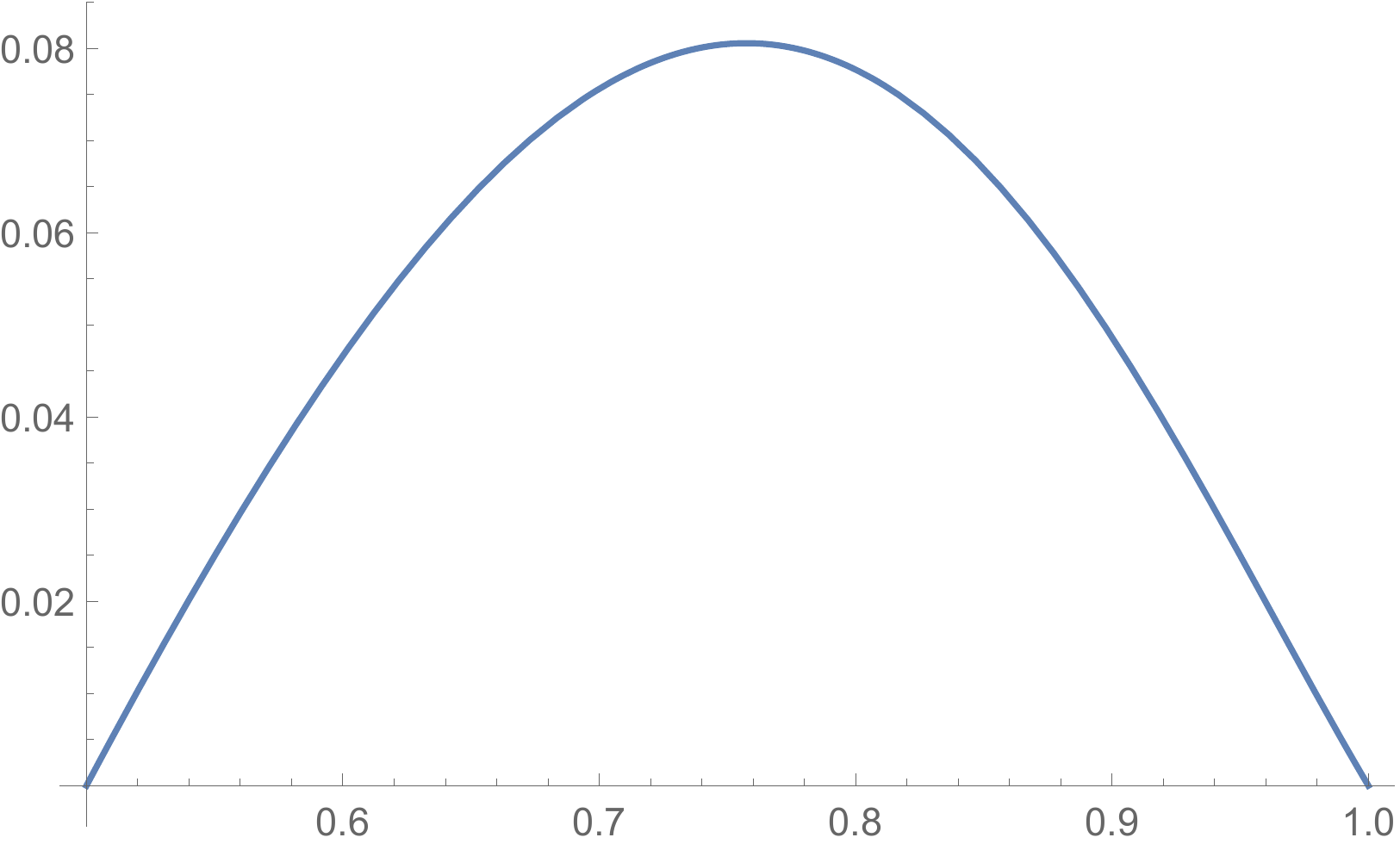} % second figure itself
        \caption{The left-hand side of \eqref{eq:posit2}.}\label{fig:coef-num}
    \end{minipage}
\end{figure}

\subsubsection{Case $n=4$}
For $n=4$, the system \eqref{eq:system} has the following form
\begin{gather*}
\left\{
\begin{aligned}
\rho_{1} &= \Gamma_4^2 + \Gamma_4^3 \rho_{1} + \Gamma_4^4 \rho_{2},\\
\rho_{2} &= \Gamma_4^2 \rho_{1} + \Gamma_4^3 + \Gamma_4^4 \rho_{1},\\
\rho_{3} &= \Gamma_4^2 \rho_{2} + \Gamma_4^3 \rho_{1}  + \Gamma_4^4.
\end{aligned}
\right.
\end{gather*}
Therefore,
\begin{align}
\label{eq:C42}
    \Gamma_4^2
    &= \frac{\rho_1 + \rho_1^2 \rho_3 + \rho_1\rho_2^2 - \rho_2 \rho_3 -\rho_1^3 - \rho_1 \rho_2}{1  + 2 \rho_1^2\rho_2 - \rho_2^2 - 2 \rho_1^2 },\\
\label{eq:C43}
    \Gamma_4^3
    &= \frac{\rho_1^2 \rho_2 - \rho_2^3 + \rho_1  \rho_2 \rho_3  - \rho_1^2 + \rho_2 - \rho_1 \rho_3}{1  + 2 \rho_1^2\rho_2 - \rho_2^2 - 2 \rho_1^2},\\
\label{eq:C44}
    \Gamma_4^4
    &= \frac{\rho_1^3 + \rho_1 \rho_2^2 - 2\rho_1 \rho_2 + \rho_3 - \rho_1^2 \rho_3}{1  + 2 \rho_1^2\rho_2 - \rho_2^2 - 2 \rho_1^2}.
\end{align}

\begin{proposition} \label{prop:n4-monot}
For any $H\in(\frac12,1)$,
\begin{gather*}
\Gamma_4^2> \Gamma_4^3
\quad\text{and}\quad
\Gamma_4^4>0.
\end{gather*}
\end{proposition}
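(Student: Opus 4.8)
The plan is to reduce everything to the sign of the numerators in \eqref{eq:C42}--\eqref{eq:C44}. Their common denominator $D=1+2\rho_1^2\rho_2-\rho_2^2-2\rho_1^2$ is exactly $\det A$ for the $3\times3$ coefficient matrix of the system \eqref{eq:system} with $n=4$, and that matrix is the covariance matrix of $(\Delta_2,\Delta_3,\Delta_4)$; by the Remark in Section~\ref{ssec:system} it is nonsingular, and being a genuine covariance matrix it is positive definite, so $D>0$. Hence $\Gamma_4^4$ has the sign of its numerator and $\Gamma_4^2-\Gamma_4^3$ the sign of the difference of numerators. I would establish the positivity statement $\Gamma_4^4>0$ first and deduce $\Gamma_4^2>\Gamma_4^3$ from it.

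For the inequality $\Gamma_4^2>\Gamma_4^3$ I would bypass the explicit fractions and work with the system directly. Subtracting the second equation from the first gives
\begin{gather*}
\rho_1-\rho_2=(\Gamma_4^2-\Gamma_4^3)(1-\rho_1)-\Gamma_4^4(\rho_1-\rho_2),
\qquad\text{hence}\qquad
\Gamma_4^2-\Gamma_4^3=\frac{(\rho_1-\rho_2)(1+\Gamma_4^4)}{1-\rho_1}.
\end{gather*}
By Corollary~\ref{c:rho-properties} we have $\rho_1>\rho_2$ and $\rho_1<1$, so the prefactor is positive and $\Gamma_4^2>\Gamma_4^3$ is equivalent to $1+\Gamma_4^4>0$. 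In particular it follows at once once $\Gamma_4^4>0$ is known; this is the easy half.

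It remains to prove $\Gamma_4^4>0$, i.e.\ that the numerator $N_4=\rho_1^3+\rho_1\rho_2^2-2\rho_1\rho_2+\rho_3-\rho_1^2\rho_3$ in \eqref{eq:C44} is positive. Collecting the terms carrying $\rho_3$ and completing a square in $\rho_2$, then inserting Lemma~\ref{l:r1r2r3} in the form $\rho_3-\rho_1=2(\rho_1^2-\rho_2)$, I would rewrite
\begin{gather*}
N_4=(1-\rho_1^2)(\rho_3-\rho_1)+\rho_1(1-\rho_2)^2=\rho_1(1-\rho_2)^2-2(1-\rho_1^2)(\rho_2-\rho_1^2).
\end{gather*}
Here lies the difficulty: the first term is positive but the second is positive and subtracted (because $\rho_2>\rho_1^2$), so monotonicity or convexity of $\{\rho_k\}$ alone does not decide the sign. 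In fact the displayed expression, regarded as a function of two free parameters $\rho_1,\rho_2$ constrained only by $\rho_1^2<\rho_2<\rho_1<1$, can be negative, so the explicit values are indispensable. Substituting $\rho_1,\rho_2$ from \eqref{eq:rho12} and the companion formula for $\rho_3$, and writing $p=2^{2H}$, $q=3^{2H}$ with $t=2H\in(1,2)$, a short computation turns $N_4>0$ into the one-variable statement $B(t)>0$, where $B=(p-1)Y^2-Z^2$ with $Y=2p-q+1>0$ and $Z=q-(p-1)^2>0$; equivalently $\sqrt{p-1}\,Y>Z$.

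The main obstacle is exactly this transcendental inequality. It vanishes at both endpoints, $B(1)=B(2)=0$, so endpoint evaluation yields nothing, and it cannot be reduced to a polynomial inequality in the $\rho_k$. I would settle it by a one-variable analysis on $[1,2]$: regarding $B$ as a generalized Dirichlet polynomial $\sum_i c_i\lambda_i^t$ in the bases $1,2,3,4,6,8,9,12,16,18$, show that it is strictly positive in the interior, for instance by proving concavity of $B$ (or of $g=\sqrt{p-1}\,Y-Z$) on $[1,2]$ and combining it with the vanishing at the endpoints, or by a direct sign analysis of $B'$. This step is elementary but genuinely computational, and is where I expect the real work to lie.
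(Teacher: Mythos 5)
Your handling of the denominator (it is the determinant of the covariance matrix of $(\Delta_2,\Delta_3,\Delta_4)$, hence positive) and your reduction of the first inequality are correct; indeed the identity
\begin{gather*}
(\Gamma_4^2-\Gamma_4^3)(1-\rho_1)=(\rho_1-\rho_2)\bigl(1+\Gamma_4^4\bigr),
\end{gather*}
obtained by subtracting the second equation of the system from the first, is cleaner than the paper's comparison of numerators. But this makes everything hinge on $\Gamma_4^4>0$, and that is precisely the step you leave undone. Reducing it to the transcendental inequality $B(t)=(p-1)Y^2-Z^2>0$ on $(1,2)$ (your algebra here is right: $B=8N_4$) and announcing that one could \enquote{prove concavity of $B$} or do \enquote{a direct sign analysis of $B'$} is not a proof; it is a genuine gap. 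Worse, the concavity route cannot work: expanding $B$ as an exponential sum in the bases $2,3,4,6,8,9,12,16,18$ and differentiating at $t=2$, the coefficients of both $\log 2$ and $\log 3$ cancel exactly, so $B(2)=B'(2)=0$; a function concave on $[1,2]$ lies below its tangent line at $t=2$, i.e.\ would satisfy $B\le 0$ on all of $[1,2]$, contradicting $B(3/2)>0$. Thus $t=2$ is a double root, $B''(2)>0$, and $B$ is convex near the right endpoint; the variant with $g=\sqrt{p-1}\,Y-Z$ is not obviously salvageable either, since the square root destroys the exponential-sum structure.

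The gap is avoidable, and your own diagnosis of why you got stuck points to the cure. Eliminating $\rho_3$ via Lemma~\ref{l:r1r2r3} discards information: after that substitution, the constraints $\rho_1^2<\rho_2<\rho_1<1$ indeed no longer determine the sign (your counterexample is correct), but only because you threw away the remaining relations involving $\rho_3$, most importantly log-convexity. The paper keeps $\rho_3$ and writes the numerator of $\Gamma_4^4$ as
\begin{gather*}
\rho_1^3+\rho_1\rho_2^2-2\rho_1\rho_2+\rho_3-\rho_1^2\rho_3
=(\rho_1-\rho_2)^2+(1-\rho_1)\bigl(\rho_3-\rho_1^2+\rho_1\rho_3-\rho_2^2\bigr),
\end{gather*}
which is positive because $\rho_3>\rho_1^2$ (Lemma~\ref{l:r12r3}) and $\rho_1\rho_3>\rho_2^2$ (log-convexity~\eqref{eq:rho-logconv}). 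So, contrary to your claim that \enquote{the explicit values are indispensable}, no new transcendental analysis is needed: the structural facts already established in Section~\ref{section2} decide the sign. Your arguments for the denominator and for $\Gamma_4^2>\Gamma_4^3$ can be kept; replacing your third step by this decomposition yields a complete proof.
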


\begin{proof}
The positivity of the denominator follows from the representation
\begin{gather}\label{eq:n4denom}
    1  + 2 \rho_1^2\rho_2 - \rho_2^2 - 2 \rho_1^2
    = (1-\rho_2) (1 -  \rho_1^2) + (1-\rho_2)(\rho_2 - \rho_1^2)
\end{gather}
and with Corollary~\ref{c:rho-properties}. Therefore, it suffices to prove the claimed relations for the numerators of $\Gamma_4^2$, $\Gamma_4^3$, and
$\Gamma_4^4$.

\medskip\noindent
1. Let us prove that $\Gamma_4^2 > \Gamma_4^3$. The difference between the numerators of $\Gamma_4^2$ and $\Gamma_4^3$ is equal to
\begin{gather*}
    (\rho_1 + \rho_1^2 \rho_3 + \rho_1\rho_2^2 - \rho_2 \rho_3 -\rho_1^3 - \rho_1 \rho_2)
    - (\rho_1^2 \rho_2 - \rho_2^3 + \rho_1  \rho_2 \rho_3  - \rho_1^2 + \rho_2 - \rho_1 \rho_3)
    \\
    =(\rho_1-\rho_2)(1+\rho_3)(1-\rho_2) + (\rho_1^2-\rho_2^2)(1-\rho_1-\rho_2+\rho_3)>0,
\end{gather*}
since $\rho_1>\rho_2$ and $1-\rho_1\ge\rho_2-\rho_3$ by Statements~1 and~2 of Corollary~\ref{c:rho-properties}.

\medskip\noindent
2. Finally, the positivity of $\Gamma_4^4$ follows from the following representation of its numerator:
\begin{gather*}
    \rho_1^3 + \rho_1 \rho_2^2 - 2\rho_1 \rho_2 + \rho_3 - \rho_1^2 \rho_3
     = (\rho_1-\rho_2)^2 + (1-\rho_1)(\rho_3-\rho_1^2+\rho_1\rho_3-\rho_2^2),
\end{gather*}
because $\rho_3>\rho_1^2$ and $\rho_1\rho_3>\rho_2^2$ by \eqref{eq:r12lessr3}, and \eqref{eq:rho-logconv}, respectively.
\end{proof}

Figure \ref{fig:n=4} confirms the above proposition. We see that $\Gamma_4^2$ is the largest coefficient. However, $\Gamma_4^3>\Gamma_4^4$ only for $H < 0.752281$, for larger $H$ the order changes.

\begin{remark}
Consider numerically the relation between $\Gamma_4^3$  and $\Gamma_4^4$ and the sign of $\Gamma_4^3$. One may represent the numerator of $\Gamma_4^3$ as follows:
\begin{gather}\label{eq:C43-numer}
    \rho_1^2 \rho_2 - \rho_2^3 + \rho_1  \rho_2 \rho_3  - \rho_1^2 + \rho_2 - \rho_1 \rho_3
    =(1-\rho_2)(\rho_2+\rho_2^2-\rho_1^2-\rho_1\rho_3).
\end{gather}
Thus we need to establish that
\begin{gather}\label{eq:posit2}
    \rho_2+\rho_2^2-\rho_1^2-\rho_1\rho_3 > 0.
\end{gather}
We established this fact numerically, since we could not come up with an analytical proof. Figure~\ref{fig:coef-num} shows the plot of the left-hand side of \eqref{eq:posit2} that confirms the positivity of $\Gamma_4^3$.

However, we can look at  \eqref{eq:posit2} from another point of view. Rewrite \eqref{eq:posit2} in the following form:
\begin{gather*}
    \frac{1+\rho_2}{\rho_1}
    > \frac{\rho_1+\rho_3}{\rho_2}.
\end{gather*}
The left- and the right-hand sides of this inequality are the values at the points $x=0$ and $x=1$, respectively, of the following function:
\begin{align*}
    \psi(H, x)&\coloneqq\frac{\rho(H,x) + \rho(H,x+2)}{\rho(H,x+1)}\\
    &=\frac{(x + 3)^{2 H} - 2 (x + 2)^{2 H} + 2 (x + 1)^{2 H} -2 x^{2 H} + (1 - x)^{2 H}}
           {(x + 2)^{2 H} - 2 (x + 1)^{2 H} + x^{2 H}},
    \quad  x\in[0,1].
\end{align*}
The graph of the surface $\set{\psi(H, x),\, x\in[0,1],\, H\in(1/2,1)}$ is shown in Figure~\ref{fig:surf}. It was natural to \emph{assume} that the function $\psi(H, x)$ decreases in $x$ for any $H$, being at $x=0$ bigger than at $x=1$. However, the function is not monotone for all $H$. Figure~\ref{fig:psi} contains two-dimensional plots of $\set{\psi(H, x),\, x\in[0,1]}$ for four different values of $H$: $0.6$, $0.7$, $0.8$ and $0.9$. We observe that $\psi(H, 0)>\psi(H, 1)$ for each value of $H$ however, the function $\psi(H, x)$ changes its behaviour from increasing to decreasing.
\end{remark}

\begin{remark}
The unexpected behaviour of ${\psi(H, x), x\in[0,1]}$ (first increasing, then decreasing) is a consequence of the non-standard term $(1 - x)^{2 H}$. For $x\ge 1$ this function decreases in $x$ for any $H>1/2$. Indeed, for $x\ge 1$ it has a form
\begin{align*}
    \psi(H, x)
    &= \frac{(x+3)^{2H} - 2(x+2)^{2H} + 2(x+1)^{2H} - 2x^{2H} + (x-1)^{2H}}
            {(x+2)^{2H} - 2(x+1)^{2H}+x^{2H}}\\
    &= -2 + \frac{(x+3)^{2H} + (x-1)^{2H} - 2(x+1)^{2H}} {(x+2)^{2H} - 2(x+1)^{2H}+x^{2H}}\\
    &= -2 + \frac{(1+\frac{2}{x+1})^{2H} + (1-\frac{2}{x+1})^{2H} - 2} {(1+\frac{1}{x+1})^{2H} + (1-\frac{1}{x+1})^{2H} -2}.
\end{align*}
Write $y=\frac{1}{x+1}\in(0,\frac12]$. It is sufficient to prove that the function
\begin{gather*}
    \eta(H,y) = \frac{(1+2y)^{2H} + (1-2y)^{2H} - 2} {(1+y)^{2H} + (1-y)^{2H} -2},
    \quad y\in(0,\tfrac12],
\end{gather*}
increases in $y$ for any $H\in(\frac12,1)$. However, for $y<\frac12$,
\begin{gather*}
    (1+y)^{2H}+(1-y)^{2H}-2 = \sum_{k=0}^\infty c_{k} y^{2k+2}
\intertext{and}
(1+2y)^{2H}+(1-2y)^{2H}-2 = \sum_{k=0}^\infty c_{k}\, (2y)^{2k+2},
\intertext{where}
    c_{k} = \frac{4H(2H-1)(2H-2)\dots(2H-2k-1)}{(2k+2)!}
    =\frac{2(2H)_{2k+2}}{(2k+2)!}, \quad k=0,1,2,\dots
\end{gather*}
(here $(x)_n = x (x-1)\dots (x-n+1)$ is the Pochhammer symbol).
The monotonicity of $\eta(H,y)$ for $y\in(0,\frac12]$ can be proved by differentiation. Then
\begin{gather}\label{eq:eta-series}
    \eta(H,y) = \frac{\sum_{k=0}^\infty c_{k}\, 2^{2k+2}y^{2k}}{\sum_{k=0}^\infty c_{k} y^{2k}},
\end{gather}
hence, the partial derivative equals
\begin{align*}
    &\frac{\partial}{\partial y}\eta(H,y)\\
    &= \left(\sum_{k=0}^\infty c_{k} y^{2k}\right)^{-2}
    \left(\sum_{k=1}^\infty c_{k}\, 2^{2k+2}2ky^{2k-1}\sum_{l=0}^\infty c_{l} y^{2l}-\sum_{k=1}^\infty c_{k} 2k y^{2k-1}\sum_{l=0}^\infty c_{l}\, 2^{2l+2}y^{2l}\right)\\
    &= \left(\sum_{k=0}^\infty c_{k} y^{2k}\right)^{-2} \sum_{k=1}^\infty \sum_{l=0}^\infty c_{k}c_{l} \cdot2k\left(2^{2k+2} - 2^{2l+2}\right)y^{2k+2l-1}.
\end{align*}
By rearranging the double sum in the numerator, we get the expression
\begin{gather*}
    \frac{\partial}{\partial y}\eta(H,y)
    = \left(\sum_{k=0}^\infty c_{k} y^{2k}\right)^{-2} \sum_{k=1}^\infty \sum_{l=0}^k c_{k}c_{l}(2k-2l)\left(2^{2k+2} - 2^{2l+2}\right)y^{2k+2l-1},
\end{gather*}
which is clearly positive. Thus for any $H\in(\frac12,1)$, $\eta(H,y)$ is increasing as a function of $y\in(0,\frac12)$.

Let us try to establish a bit more. We can represent $\eta(H,y)$ in the following form
\begin{gather*}%\label{eq:eta-series}
\eta(H,y) = \frac{\sum_{k=0}^\infty c_{k}\, 2^{2k+2}y^{2k}}{\sum_{k=0}^\infty c_{k} y^{2k}} = \sum_{k=0}^\infty b_{k} y^{2k},
\end{gather*}
where the coefficients $b_k$ can be found successively from the following equations:
\begin{align*}
    2^2 c_0 &= c_0 b_0,\\
    2^4 c_1 &= c_0 b_1 + c_1 b_0,\\
    2^6 c_2 &= c_0 b_2 + c_1 b_1 + c_2 b_0,\\
    2^8 c_3 &= c_0 b_3 + c_1 b_2 + c_2 b_1 + c_3 b_0,\\
            &\dots
\end{align*}
Let us find the first few coefficients: $b_0= 2^2=4$,
\begin{align*}
    b_1 &= \frac{2^4 c_1 -  2^2 c_1}{c_0}
         =\frac{(2^4 -  2^2)\,\frac{(2H-1)(2H-2)(2H-3)}{4!}}{\frac{(2H-1)}{2!}}
         = (2H-2)(2H-3),\\
    b_2 &= \frac{(2^6 -  2^2) c_2 - c_1 b_1}{c_0}\\
        &= \frac{60\,\frac{(2H-1)(2H-2)(2H-3)(2H-4)(2H-5)}{6!}-\frac{(2H-1)(2H-2)^2(2H-3)^2}{4!}}{\frac{(2H-1)}{2!}}\\
        &= \frac{2!(2H-2)(2H-3)}{4!}\bigl(2 (2H-4)(2H-5) - (2H-2)(2H-3)\bigr)\\
        &= \frac16 (2H-2)(2H-3) (2H^2-13H+17).
\end{align*}
It is easy to see that $b_0$, $b_1$, and $b_2$ are positive for $H\in(\frac12,1)$.
We believe that $b_k>0$ for all $k$.
However, the proof of this fact remains an open problem.
\end{remark}

\begin{figure}
    \centering
    \begin{minipage}[b]{0.47\textwidth}
        \centering
        \includegraphics[width=\textwidth]{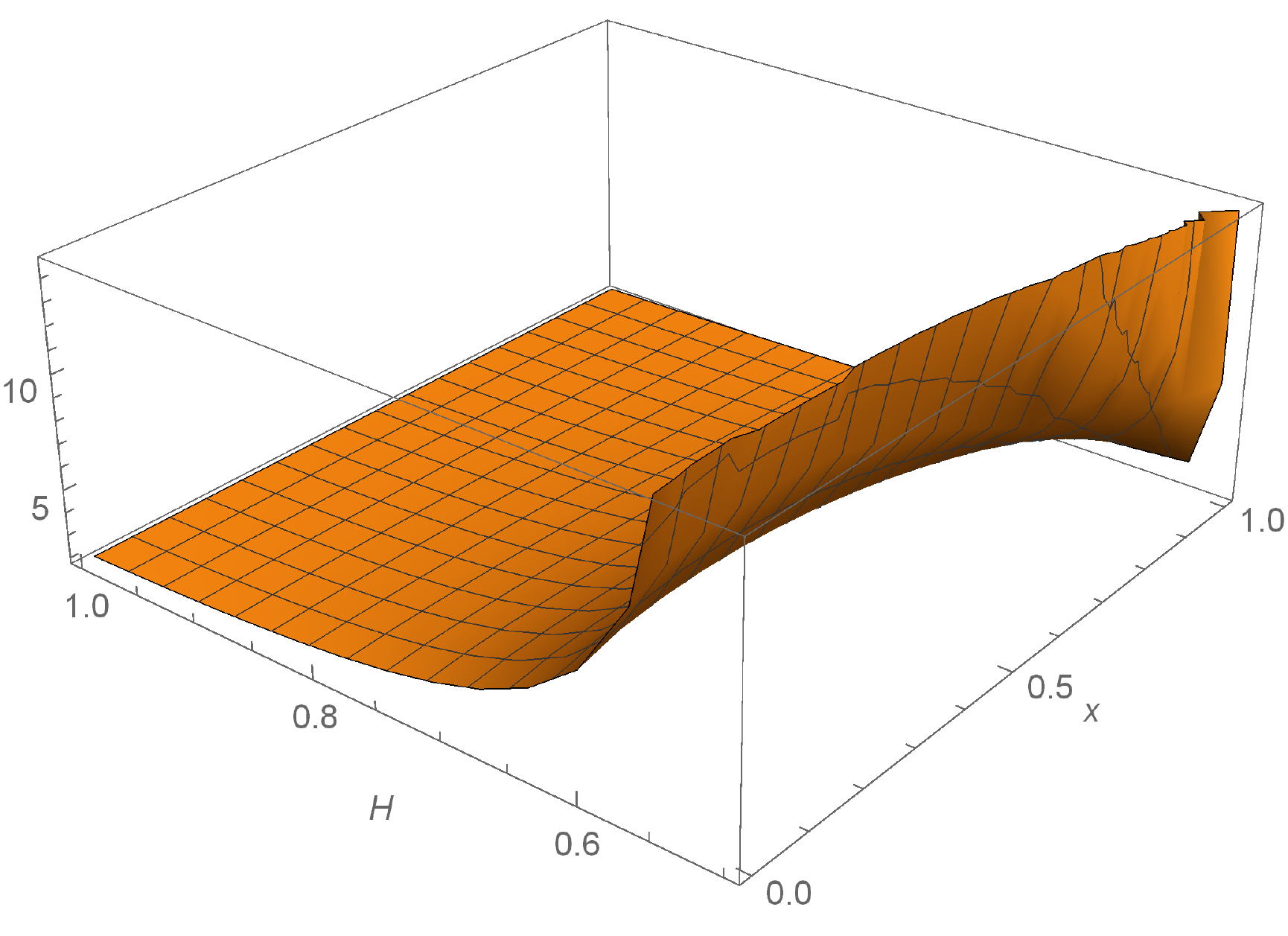} % first figure itself
        \caption{The function $\psi(H,x)$.}\label{fig:surf}
    \end{minipage}\hfill
    \begin{minipage}[b]{0.47\textwidth}
        \centering
        \includegraphics[width=\textwidth]{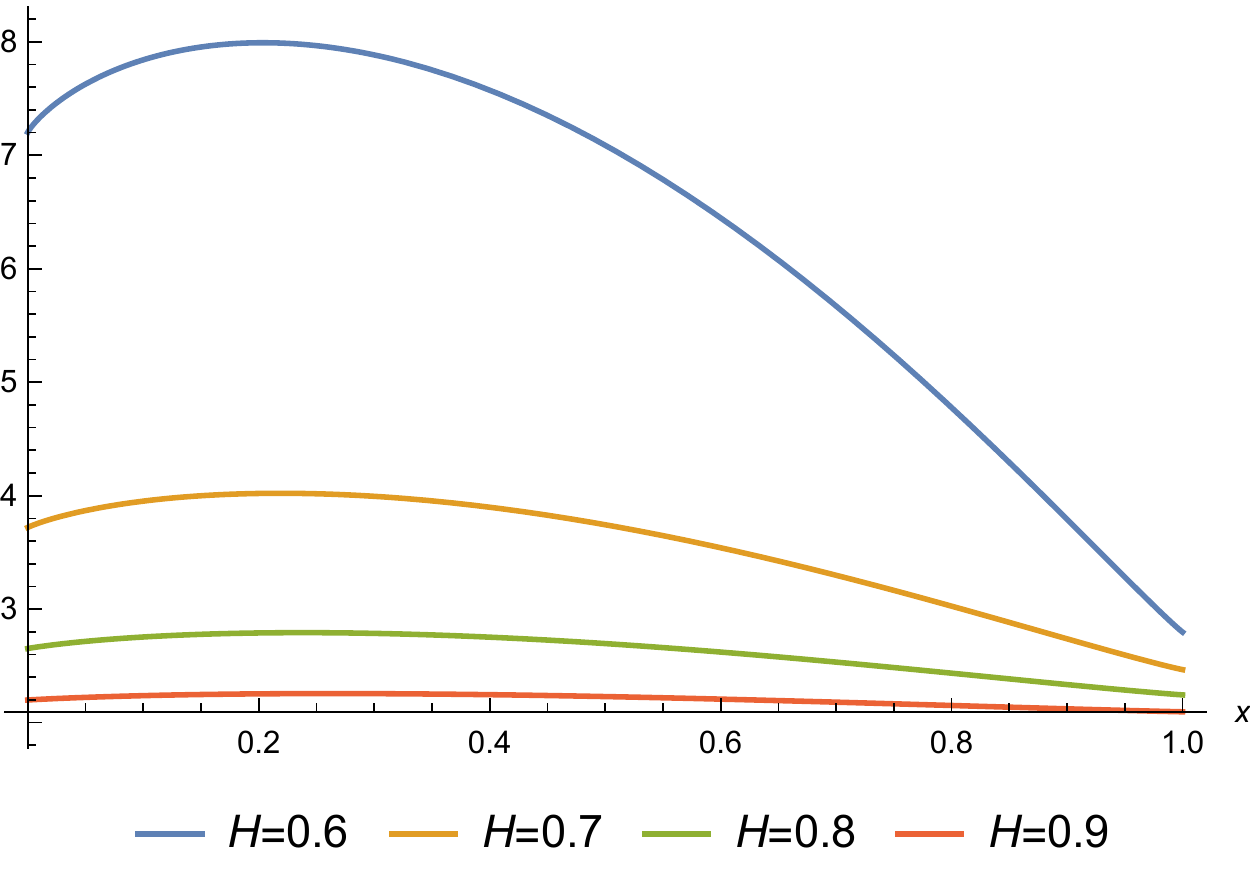} % second figure itself
        \caption{The function $\psi(H, x)$.}\label{fig:psi}
    \end{minipage}
\end{figure}

\begin{proposition}\label{prop:n4-lim}
\begin{align*}
\lim_{H\uparrow1} \Gamma_4^2 &= \frac{531 \log^2 4 + 72 \log^2 6 + 51 \log^2 9 - 384 \log^2 12 + 108 \log^2 18}{96 \log^2 12 - 640 \log^2 2 - 51\log^2 9}
\approx 0.742250,
\\
\lim_{H\uparrow1} \Gamma_4^3 &= \frac{48 \log 2 -15 \log 9}{16 \log 2 - 3 \log 9}
\approx 0.069508,
\\
\lim_{H\uparrow1} \Gamma_4^4 &= \frac{108 \log^2 18-364 \log^2 2 - 216 \log 2 \log 9 - 81 \log^2 9}{96\log^2 12 - 640 \log^2 2 - 51 \log^2 9}
\approx 0.188242.
\end{align*}
\end{proposition}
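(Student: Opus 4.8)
The plan is to treat each of the three limits as a $0/0$ indeterminate form and to evaluate it by l'Hôpital's rule, exactly as in the proof of Proposition~\ref{prop:n3-lim}, but paying attention to the \emph{order} of vanishing, which differs between $\Gamma_4^3$ on the one hand and $\Gamma_4^2,\Gamma_4^4$ on the other. The starting point is that all three coefficients are built from $\rho_1,\rho_2,\rho_3$, which by \eqref{eq:rho12} together with $\rho_3=\frac12(4^{2H}-2\cdot3^{2H}+2^{2H})$ are smooth functions of $H$ that all tend to $1$ as $H\uparrow1$; hence every numerator and the common denominator tends to $0$. The only analytic inputs are the values at $H=1$ of the first two derivatives of $m^{2H}$, namely $2m^2\log m$ and $4m^2\log^2 m$, from which one reads off $\rho_1'(1)=4\log2$, $\rho_2'(1)=9\log3-8\log2$, $\rho_3'(1)=36\log2-18\log3$ and the analogous second derivatives.

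For $\Gamma_4^3$ I would first simplify. The factorization \eqref{eq:C43-numer} of the numerator and the representation \eqref{eq:n4denom} of the denominator both contain the factor $(1-\rho_2)$, which does not vanish identically and may be cancelled. This reduces $\Gamma_4^3$ to
\begin{gather*}
    \Gamma_4^3 = \frac{\rho_2+\rho_2^2-\rho_1^2-\rho_1\rho_3}{1+\rho_2-2\rho_1^2},
\end{gather*}
a ratio whose numerator and denominator each vanish only to \emph{first} order at $H=1$. A single application of l'Hôpital then expresses the limit as the ratio of the two first derivatives at $H=1$; inserting the values above yields a fraction that simplifies to the stated $\frac{48\log2-15\log9}{16\log2-3\log9}$.

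For $\Gamma_4^2$ and $\Gamma_4^4$ no such cancellation shortens the denominator: writing it as in \eqref{eq:n4denom} exhibits it as a product $(1-\rho_2)\bigl[(1-\rho_1^2)+(\rho_2-\rho_1^2)\bigr]$ of two factors that both vanish at $H=1$, so the denominator vanishes to \emph{second} order, and since the limits are finite and nonzero the two numerators must do the same. I would therefore apply l'Hôpital twice, i.e.\ compute $\lim = N''(1)/D''(1)$, after checking that the first derivatives of numerator and denominator vanish at $H=1$ (for the denominator this is automatic from the product structure). Equivalently, one expands each $\rho_k$ to second order in $(H-1)$ and reads off the coefficient of $(H-1)^2$ in the numerator and denominator, using the first and second derivatives of $\rho_1,\rho_2,\rho_3$ recorded above.

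The routine but delicate part — and the main obstacle — is this second-order computation for $\Gamma_4^2$ and $\Gamma_4^4$. Expanding the cubic expressions $\rho_1^2\rho_3$, $\rho_1\rho_2^2$, $\rho_1^2\rho_2$, and the like to second order generates many terms, and all first-order contributions must cancel; verifying this cancellation is the crucial consistency check that the numerators are genuinely $O((H-1)^2)$. Finally, the raw second-order coefficients come out as combinations of $\log^2 2$, $\log2\log3$ and $\log^2 3$, whereas the proposition records them through $\log^2 6$, $\log^2 12$, $\log^2 18$ (and $\log^2 4$, $\log^2 9$); matching the two requires the elementary identities $\log6=\log2+\log3$, $\log12=2\log2+\log3$, $\log18=\log2+2\log3$, together with a short check that the numerator and denominator of each displayed fraction are proportional to $N''(1)$ and $D''(1)$ with the same constant of proportionality, so that the extra factor cancels in the ratio.
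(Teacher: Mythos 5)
Your proposal matches the paper's own (sketched) proof: substitute the explicit expressions for $\rho_1,\rho_2,\rho_3$, cancel the common factor $1-\rho_2$ in $\Gamma_4^3$ using \eqref{eq:n4denom} and \eqref{eq:C43-numer}, and then resolve the $0/0$ forms by l'H\^opital's rule — once for $\Gamma_4^3$, twice for $\Gamma_4^2$ and $\Gamma_4^4$ — followed by logarithm algebra. Your bookkeeping of the orders of vanishing and the derivative values $\rho_1'(1)=4\log 2$, $\rho_2'(1)=9\log 3-8\log 2$, $\rho_3'(1)=36\log 2-18\log 3$ is correct, so this is the same argument, just organized via Taylor coefficients of the $\rho_k$ rather than via the exponential expressions the paper writes out.
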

\begin{remark}
 Obviously, the sum of the limits of the coefficients is $1$, as expected.
\end{remark}
\begin{proof}[Sketch of proof]
The proof is straightforward. Substituting $\rho_1= 2^{2H-1} - 1$, $\rho_2= \frac12\left(3^{2H} - 2^{2H+1} + 1\right)$, and $\rho_3 = \frac12\left(4^{2H} - 2\cdot 3^{2H} + 2^{2H}\right)$ into \eqref{eq:C42}--\eqref{eq:C44}, and simplifying the resulting expressions, we get
\begin{footnotesize}
\begin{align*}
    \Gamma_4^2
    &= \frac{2-7\cdot 4^H+4^{3 H+1}+6^{2 H+1}-4\cdot 9^H-2^{4 H+3}\cdot 9^H+2\cdot 9^{2H} +4^{4H}+18^{2H}}{2 \left(-2^{6 H+1}+2\cdot 9^H + 3\cdot 2^{4H} - 9^{2H} + 12^{2H}-1\right)},
\\
    \Gamma_4^3
    &= \frac{-2^{2 H+1}+4^{2 H+1}-2^{2 H+1}\cdot 9^H - 64^H + 9^{2H} - 1}{2 \left(2^{2 H+1}+9^H-16^H-1\right)},
\\
    \Gamma_4^4
    &= \frac{-3\cdot 2^{4 H+1} + 4^H + 8^{2 H+1} + 4\cdot 9^H - 2^{2 H+1} \cdot 9^H - 2^{4 H+1} \cdot 9^H - 2\cdot 9^{2H} - 4^{4H} + 18^{2H} - 2}{2 \left(-2^{6 H+1}+2\cdot 9^H + 3\cdot 2^{4H} - 9^{2H} + 12^{2H}-1\right)}
\end{align*}
\end{footnotesize}%
(for $\Gamma_4^3$ we first cancel out the factor $1-\rho_2$, see \eqref{eq:n4denom} and \eqref{eq:C43-numer}). Then applying l'H\^opital's rule (twice) we arrive at the claimed limits by simple algebra.
\end{proof}

\subsubsection{Case $n=5$}
For $n=5$, we present graphical results only, see Figure~\ref{fig:n=5}. The situation here is more complicated compared to the case $n=4$. The first coefficient $\Gamma_5^2$ is still the largest, however the order of three other coefficients changes several times depending on $H$. In particular, for $H$ close to 1/2 these coefficients are decreasing, but for $H$ close to 1 they are increasing.

\subsection{Recurrence relations for the coefficients}\label{ssec:recurrent}
In general, there are several ways how to get \eqref{eq:system}. For example, we can consider the coefficients $\Gamma_n^k$ as a result of minimizing the value of the quadratic form
\begin{gather*}
    \Ee(\Delta_1-\sum_{k=2}^n \alpha_k\Delta_k)^2.
\end{gather*}
Evidently, differentiation leads again to the system \eqref{eq:system}. We can look for the coefficients with the help of the inverse matrix $A^{-1}$, where $A$ is from \eqref{matrices}. However, to calculate the entries of the inverse matrix, is as difficult as to calculate the determinants. But it is possible to  avoid determinants using the properties of fGn. More precisely,  we propose a recurrence method to calculate the coefficients $\Gamma_n^k$ successively, starting with $\Gamma_2^2 = \rho_{1} = 2^{2H-1} -1$.

\begin{proposition}
The following relations hold true:
\begin{align}
    \Gamma_{n+1}^{n+1}
    &= \frac{\rho_{n} - \sum_{k=2}^n \Gamma_n^k \rho_{n+1-k}}{1 - \sum_{k=2}^n \Gamma_n^k \rho_{k-1}},
    \quad n\ge2,
\label{eq:Cn+1n+1}
\\
    \Gamma_{n+1}^k
    &= \Gamma_n^k - \Gamma_{n+1}^{n+1} \Gamma_n^{n-k+2},
    \quad n\ge2,\; 2\le k\le n.
\label{eq:Cn+1k}
\end{align}
\end{proposition}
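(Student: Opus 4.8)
The plan is to use a \emph{guess-and-verify} strategy, exploiting the fact recorded in the Remark after \eqref{matrices-2} that $\det A\neq0$, so that the system \eqref{eq:system} has a \emph{unique} solution for every $n$. It therefore suffices to exhibit numbers $\Gamma_{n+1}^k$ that satisfy the order-$(n+1)$ version of \eqref{eq:system} and to observe that they coincide with the right-hand sides of \eqref{eq:Cn+1n+1}--\eqref{eq:Cn+1k}; this is exactly the classical Levinson--Durbin recursion adapted to the present notation. The structural input that makes it work is the \emph{persymmetry} of the Toeplitz coefficient matrix: because $\rho_{|l-k|}$ depends only on $|l-k|$ and $\rho$ is even, the reversed coefficient vector $(\Gamma_n^{n-k+2})_{k=2}^{n}$ solves a \enquote{reflected} copy of the system. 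Concretely, reindexing $j=n-k+2$ and invoking \eqref{eq:system} at the index $n-l+2$ gives
\[
    \sum_{k=2}^{n}\Gamma_n^{n-k+2}\,\rho_{|l-k|}=\rho_{n+1-l},\qquad 2\le l\le n.
\]

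With this identity in hand, I would set $c:=\Gamma_{n+1}^{n+1}$, \emph{define} the candidate by $\Gamma_{n+1}^k:=\Gamma_n^k-c\,\Gamma_n^{n-k+2}$ for $2\le k\le n$ together with $\Gamma_{n+1}^{n+1}:=c$, and substitute it into the order-$(n+1)$ system, splitting each sum into the block $k=2,\dots,n$ and the lone term $k=n+1$. For the equation index in the range $2\le l\le n$, the first block reproduces $\rho_{l-1}$ by \eqref{eq:system}, while the correction $-c\sum_{k=2}^{n}\Gamma_n^{n-k+2}\rho_{|l-k|}$ equals $-c\,\rho_{n+1-l}$ by the reflected identity above; since the term $k=n+1$ contributes $c\,\rho_{|l-(n+1)|}=c\,\rho_{n+1-l}$, the two cancel exactly and the equation holds \emph{for every} value of $c$. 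Only the single equation $l=n+1$ remains, and, using $\rho_0=1$ together with the same reindexing (which turns $\rho_{n+1-k}$ into $\rho_{k-1}$), it becomes
\[
    \rho_n=\sum_{k=2}^{n}\Gamma_n^k\rho_{n+1-k}-c\sum_{k=2}^{n}\Gamma_n^k\rho_{k-1}+c.
\]
Solving this \emph{linear} equation for $c$ yields precisely \eqref{eq:Cn+1n+1}, and the definition of the candidate is \eqref{eq:Cn+1k}; uniqueness of the solution then closes the argument.

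The step requiring the most care is the reversal identity together with the attendant index bookkeeping: one must check that $n-k+2$ and $n-l+2$ genuinely remain in the range $\{2,\dots,n\}$ and that the cancellation rests on the symmetry $\rho_{n-l+1}=\rho_{n+1-l}$, i.e.\ on the evenness of $\rho$ and stationarity rather than on any special feature of the fGn covariance. A second point that must be addressed for the recursion to make sense is the \emph{well-definedness} of $c$, namely that the denominator $1-\sum_{k=2}^n\Gamma_n^k\rho_{k-1}$ in \eqref{eq:Cn+1n+1} does not vanish. This denominator equals $\Ee\big(\Delta_1-\sum_{k=2}^n\Gamma_n^k\Delta_k\big)^2$, the variance of the prediction error, which is strictly positive because $(\Delta_1,\dots,\Delta_n)$ has a nonsingular covariance matrix by the Remark after \eqref{matrices-2}; this both legitimises the division and, as a welcome by-product, gives the quantities in \eqref{eq:Cn+1n+1} a clean probabilistic meaning.
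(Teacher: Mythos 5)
Your proof is correct, but it takes a genuinely different route from the paper. The paper argues probabilistically: it invokes the theorem on normal correlation to decompose the projection of $\Delta_1$ onto $(\Delta_2,\dots,\Delta_{n+1})$ as the projection onto $(\Delta_2,\dots,\Delta_n)$ plus a multiple of the innovation $\Delta_{n+1}-\Ee\left(\Delta_{n+1}\mid\Delta_2,\dots,\Delta_n\right)$, which is independent of the conditioning variables; matching coefficients (via conditional expectation and linear independence) yields \eqref{eq:Cn+1k}, and computing the covariance of $\Delta_1$ with the innovation -- using the time-reversal symmetry of the stationary increments to identify $\Ee\left(\Delta_{n+1}\mid\Delta_2,\dots,\Delta_n\right)=\sum_{k=2}^n\Gamma_n^{n-k+2}\Delta_k$ -- yields \eqref{eq:Cn+1n+1}. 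You instead work entirely at the level of the linear system \eqref{eq:system}: your reflected identity is the algebraic avatar of the same time-reversal symmetry (persymmetry of the Toeplitz matrix), and the guess-and-verify step plus uniqueness from $\det A\neq0$ replaces the innovations decomposition; this is indeed the Levinson--Durbin recursion. Each approach has its merits. The paper's derivation is constructive and endows every quantity with probabilistic meaning (the new coefficient is a regression coefficient on the innovation), which is then reused almost verbatim in the positivity proof of $\Gamma_{n+1}^2$ in Section~\ref{ssec:first-pos}. Your argument is more elementary -- it needs no Gaussianity or conditional expectations, only stationarity and nonsingularity, so it applies verbatim to any stationary sequence with nonsingular covariance -- and it makes explicit a point the paper passes over in silence: that the denominator $1-\sum_{k=2}^n\Gamma_n^k\rho_{k-1}$ is the prediction error variance $\Ee\bigl(\Delta_1-\sum_{k=2}^n\Gamma_n^k\Delta_k\bigr)^2$, strictly positive by linear independence, so the division in \eqref{eq:Cn+1n+1} is legitimate.
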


\begin{proof}
In order to prove \eqref{eq:Cn+1n+1} and \eqref{eq:Cn+1k}, we use the theorem on normal correlation as well as the independence of
$\Delta_{n+1} - \Ee\left(\Delta_{n+1} \mid \Delta_2,\dots,\Delta_n\right)$ and any of $\Delta_k, 2\le k\le n$. We get
\begin{gather}\label{eq:proj2}
    \Ee \left(\Delta_1 \mid \Delta_2,\dots,\Delta_n,\Delta_{n+1}\right)
    = \sum_{k=2}^n \widetilde \Gamma_n^k \Delta_k + \Gamma_{n+1}^{n+1} \left(\Delta_{n+1} - \Ee\left(\Delta_{n+1} \mid \Delta_2,\dots,\Delta_n\right)\right),
\end{gather}
where $\widetilde \Gamma_n^k$, $2\le k\le n$, are some constants. Now we take the conditional expectation $\Ee \left(\cdot\mid \Delta_2,\dots,\Delta_n\right)$ on both sides of \eqref{eq:proj2} to get
\begin{gather*}
    \Ee \left(\Delta_1 \mid \Delta_2,\dots,\Delta_n\right)
    = \sum_{k=2}^n \widetilde \Gamma_{n+1}^k \Delta_k.
\end{gather*}
Comparing this equality with \eqref{eq:proj1}, and taking into account that the increments  $\Delta_k, 2\le k\le n$ are linearly independent,
we conclude that
\begin{gather*}
    \sum_{k=2}^n \widetilde \Gamma_{n+1}^k \Delta_k
    = \sum_{k=2}^n \Gamma_n^k \Delta_k.
\end{gather*}
Now we insert this equality into \eqref{eq:proj2} and see
\begin{gather}\label{eq:eq1}
    \Ee \left(\Delta_1 \mid \Delta_2,\dots,\Delta_n,\Delta_{n+1}\right)
    = \sum_{k=2}^n \Gamma_n^k \Delta_k
        + \Gamma_{n+1}^{n+1} \left(\Delta_{n+1} - \Ee\left(\Delta_{n+1} \mid \Delta_2,\dots,\Delta_n\right)\right).
\end{gather}
After multiplying both sides of the last equality by $\left(\Delta_{n+1} - \Ee\left(\Delta_{n+1} \mid \Delta_2,\dots,\Delta_n\right)\right)$ and taking expectations, we arrive at
\begin{gather*}
    \Ee \left(\Delta_1 \left(\Delta_{n+1} - \Ee\left(\Delta_{n+1} \mid \Delta_2,\dots,\Delta_n\right)\right)\right)
    = \Gamma_{n+1}^{n+1} \Ee\left(\Delta_{n+1} - \Ee\left(\Delta_{n+1} \mid \Delta_2,\dots,\Delta_n\right)\right)^2.
\end{gather*}
It follows from the stationarity of the increments that the indices $n+1$ and $1$ of the last equality play symmetric roles, i.e.\ it is equivalent to
\begin{gather*}
    \Ee \left(\Delta_{n+1} \left(\Delta_{1} - \Ee\left(\Delta_{1} \mid \Delta_2,\dots,\Delta_n\right)\right)\right)
    = \Gamma_{n+1}^{n+1} \Ee\left(\Delta_{1} - \Ee\left(\Delta_{1} \mid \Delta_2,\dots,\Delta_n\right)\right)^2.
\end{gather*}
From this we conclude that
\begin{align*}
    \Gamma_{n+1}^{n+1}
    &= \frac{\Ee \left(\Delta_{n+1} \left(\Delta_{1} - \Ee\left(\Delta_{1} \mid \Delta_2,\dots,\Delta_n\right)\right)\right)}
        {\Ee\left(\Delta_{1} - \Ee\left(\Delta_{1} \mid \Delta_2,\dots,\Delta_n\right)\right)^2}
\\
    &= \frac{\rho_{n} - \sum_{k=2}^n \Gamma_n^k \rho_{n+1-k}}{1 - \sum_{k=2}^n \Gamma_n^k \rho_{k-1}}.
\end{align*}
Thus the relation \eqref{eq:Cn+1n+1} is proved.

Using again the symmetry of the stationary increments, it is not hard to see that
\begin{gather*}
    \Ee \left(\Delta_{n+1} \mid \Delta_2,\dots,\Delta_n\right)
    = \sum_{k=2}^n \Gamma_n^{n-k+2} \Delta_k.
\end{gather*}
Therefore, we get from \eqref{eq:eq1} that
\begin{align*}
    \Ee \left(\Delta_1 \mid \Delta_2,\dots,\Delta_n,\Delta_{n+1}\right)
    &= \sum_{k=2}^n \Gamma_n^k \Delta_k + \Gamma_{n+1}^{n+1} \Delta_{n+1} - \Gamma_{n+1}^{n+1} \sum_{k=2}^n \Gamma_n^{n-k+2} \Delta_k
\\
    &= \sum_{k=2}^n \left(\Gamma_n^k - \Gamma_{n+1}^{n+1}\Gamma_n^{n-k+2}\right)\Delta_k + \Gamma_{n+1}^{n+1} \Delta_{n+1},
\end{align*}
and \eqref{eq:Cn+1k} follows.
\end{proof}

\subsection{Positivity of $\Gamma_n^2$}\label{ssec:first-pos}
We conjecture that all coefficients $\Gamma_n^k$, $2\le k\le n$, are positive.
However, analytically we can prove only the positivity of the leading coefficient, $\Gamma_n^2$.
\begin{proposition}
    For all $n\ge1$, $\Gamma_{n+1}^2>0$.
\end{proposition}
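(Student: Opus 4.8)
The plan is to reduce the positivity of $\Gamma_{n+1}^2$ to the positivity of a single conditional covariance, and then feed this through the recurrence relations just obtained. First I would apply the theorem on normal correlation not to the whole family $\Delta_2,\dots,\Delta_{n+1}$ at once, but after projecting out $\Delta_3,\dots,\Delta_{n+1}$. Writing $\tilde\Delta_i=\Delta_i-\Ee\left(\Delta_i\mid\Delta_3,\dots,\Delta_{n+1}\right)$ for $i=1,2$, the usual partialling-out (Frisch--Waugh) identity gives
\[
    \Gamma_{n+1}^2=\frac{\Ee\left(\Delta_1\tilde\Delta_2\right)}{\Ee\left(\tilde\Delta_2^2\right)}.
\]
Because the denominator is a strictly positive prediction-error variance (the covariance matrix of fGn is non-singular), $\Gamma_{n+1}^2>0$ is \emph{equivalent} to $\Ee\left(\Delta_1\tilde\Delta_2\right)>0$. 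Using stationarity to identify $\Ee\left(\Delta_2\mid\Delta_3,\dots,\Delta_{n+1}\right)=\sum_{k=2}^n\Gamma_n^k\Delta_{k+1}$, this partial covariance equals $\rho_1-\sum_{i=2}^n\Gamma_n^i\rho_i$, so the whole proposition collapses to the single scalar inequality
\[
    \rho_1>\sum_{i=2}^n\Gamma_n^i\rho_i,
\]
equivalently $\Ee(e_n\Delta_0)>0$, where $e_n=\Delta_1-\sum_{k=2}^n\Gamma_n^k\Delta_k$ is the one-sided prediction error and $\Delta_0$ is the increment on the opposite side of $\Delta_1$ in the two-sided stationary sequence.

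Next I would link this to the recurrence \eqref{eq:Cn+1k}. Its instance $k=2$ reads $\Gamma_{n+1}^2=\Gamma_n^2-\Gamma_{n+1}^{n+1}\Gamma_n^n$, and since the diagonal entry $\Gamma_m^m$ is precisely the reflection coefficient delivered by \eqref{eq:Cn+1n+1} at the previous level, iterating produces the telescoping representation
\[
    \Gamma_{n+1}^2=\rho_1-\sum_{j=2}^n\Gamma_{j+1}^{j+1}\,\Gamma_j^j,
\]
which is exactly the discrete counterpart of $\rho_1-\sum_i\Gamma_n^i\rho_i$ and pins down what must be controlled: the diagonal coefficients $\Gamma_{m+1}^{m+1}$, i.e.\ the partial autocorrelations of fGn. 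The natural strategy is then a joint induction on the pair $(\Gamma_m^2,\Gamma_m^m)$ under the hypothesis $0<\Gamma_m^m\le\Gamma_m^2$ together with $0<\Gamma_{m+1}^{m+1}<1$; under these assumptions
\[
    \Gamma_{n+1}^2=\Gamma_n^2-\Gamma_{n+1}^{n+1}\Gamma_n^n\ge\left(1-\Gamma_{n+1}^{n+1}\right)\Gamma_n^2>0
\]
closes the positivity step at once, and the base case $m=3$ is supplied by Proposition~\ref{prop:n3-monot} (with $\Gamma_4^4>0$ from Proposition~\ref{prop:n4-monot}).

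The hard part will be the sign and size of the diagonal coefficients $\Gamma_{m+1}^{m+1}$. The bound $\lvert\Gamma_{m+1}^{m+1}\rvert<1$ comes for free from non-singularity of the covariance matrix (the Remark after \eqref{matrices-2}), but it is hopelessly weak here: in the telescoping sum it would permit a total of order $n-1$, vastly exceeding $\rho_1<1$. What is genuinely required is that the partial autocorrelations be \emph{positive} and decay fast enough that $\sum_{j\ge 2}\Gamma_{j+1}^{j+1}\Gamma_j^j<\rho_1$. The complete monotonicity and log-convexity of $\rho$ established in Lemma~\ref{l:cm} and Corollary~\ref{c:rho-properties} make this extremely plausible; indeed, for the fractionally integrated analogue the partial autocorrelations are explicitly $d/(m-d)$ with $d=H-\tfrac12$, and there the sum telescopes to leave precisely the positive limit $d$. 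Converting the qualitative convexity information on $\rho$ into the positivity of the reflection coefficients of fGn and the quantitative inequality above — or, equivalently, establishing the auxiliary monotonicity $\Gamma_{m}^{m}\le\Gamma_m^2$ that the induction needs — is the real obstacle, and it does not appear to follow formally from the properties of $\rho$ proved so far; at this point the argument seems to need finer spectral or innovation-type information about the Wold representation of fGn.
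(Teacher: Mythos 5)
Your first half coincides with the paper's own proof: the partialling-out identity $\Gamma_{n+1}^2=\Ee\left(\Delta_1\tilde\Delta_2\right)/\Ee\left(\tilde\Delta_2^2\right)$, the use of stationarity to identify $\Ee\left(\Delta_2\mid\Delta_3,\dots,\Delta_{n+1}\right)$, and the reduction to the scalar inequality $\rho_1>\sum_{k=2}^n\Gamma_n^k\rho_k$ are exactly the paper's steps. The gap is everything after that. Your telescoping identity $\Gamma_{n+1}^2=\rho_1-\sum_{j=2}^n\Gamma_{j+1}^{j+1}\Gamma_j^j$ is correct, but the induction you build on it requires $\Gamma_m^m>0$ and $\Gamma_m^m\le\Gamma_m^2$ for every $m$, i.e.\ positivity (plus a size bound) of the whole diagonal of the triangular array. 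That is not an auxiliary technicality: the diagonal entries are themselves projection coefficients, so their positivity is part of Conjecture~\ref{con:A}, the very statement that is open. Your proposal thus replaces the target inequality by hypotheses at least as strong, and, as you concede yourself, you cannot establish them; as it stands this is not a proof.

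The step you are missing is far more elementary and uses no information beyond what is already on the table: the coefficients $\Gamma_n^k$ satisfy the linear system \eqref{eq:system}, whose equation with $l=2$ reads $\rho_1=\sum_{k=2}^n\Gamma_n^k\rho_{k-2}$. Substituting this expression for $\rho_1$ into the partial covariance gives
\begin{gather*}
    \rho_1-\sum_{k=2}^n\Gamma_n^k\rho_k
    =\sum_{k=2}^n\Gamma_n^k\left(\rho_{k-2}-\rho_k\right),
\end{gather*}
and the paper then concludes positivity from the strict decrease of the sequence $(\rho_k)$ (Corollary~\ref{c:rho-properties}). Be aware, however, that this final inference is term-by-term: $\rho_{k-2}-\rho_k>0$ makes each summand nonnegative only if $\Gamma_n^k\ge0$, which is again Conjecture~\ref{con:A}; so your instinct that the proved properties of $\rho$ alone do not formally force the sign of this partial covariance is pointing at a genuine subtlety in the paper's own closing line. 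Nevertheless, the argument the paper intends --- feed the defining system back into the numerator and invoke monotonicity of $\rho$ --- is the concrete step your proposal lacks, and it is the shortest route from the point where you stopped.
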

\begin{proof}
From the stationarity of the increments it follows that
\begin{gather*}
    \Ee \left(\Delta_2 \mid \Delta_3,\dots,\Delta_{n+1}\right)
    = \sum_{k=3}^{n+1} \Gamma_n^{k-1} \Delta_k.
\end{gather*}
Similarly to \eqref{eq:eq1},
\begin{gather*}
    \Ee \left(\Delta_1 \mid \Delta_2,\dots,\Delta_n,\Delta_{n+1}\right)
    = \widetilde \Gamma_{n+1}^{2} \left(\Delta_{2} - \Ee\left(\Delta_{2} \mid \Delta_3,\dots,\Delta_{n+1}\right)\right)
    + \sum_{k=3}^{n+1} \Gamma_{n+1}^{k} \Delta_k,
\end{gather*}
and so
\begin{gather*}
    \Gamma_{n+1}^{2}
    = \widetilde \Gamma_{n+1}^{2}
    = \frac{\Ee\left(\left(\Delta_{2} - \Ee\left(\Delta_{2} \mid \Delta_3,\dots,\Delta_{n+1}\right)\right)\Delta_1\right)}
           {\Ee\left(\Delta_{2} - \Ee\left(\Delta_{2} \mid \Delta_3,\dots,\Delta_{n+1}\right)\right)^2}.
\end{gather*}
It remains to prove the positivity of the numerator
\begin{gather*}
    \Ee\left(\left(\Delta_{2} - \Ee\left(\Delta_{2} \mid \Delta_3,\dots,\Delta_{n+1}\right)\right)\Delta_1\right)
    = \rho_1 - \sum_{k=3}^{n+1} \Gamma_n^{k-1} \rho_{k-1}.
\end{gather*}
But we know from \eqref{eq:system} that
\begin{gather*}
    \rho_{1} = \sum_{k=2}^n \Gamma_n^k \rho_{k-2}.
\end{gather*}
Therefore,
\begin{gather*}
    \rho_{1} - \sum_{k=3}^{n+1} \Gamma_n^{k-1} \rho_{k-1}
    =\sum_{k=2}^n \Gamma_n^k \rho_{k-2} - \sum_{k=2}^{n} \Gamma_n^k \rho_{k}
    = \sum_{k=2}^n \Gamma_n^k \left(\rho_{k-2} - \rho_{k}\right)
    >0,
\end{gather*}
since the sequence $\rho_{k}$ is decreasing, see Corollary~\ref{c:rho-properties}.
\end{proof}

\section{Numerical results}
\label{section3}

\subsection{Properties of coefficients. Positivity and (non)monotonicity}

In this section we compute numerically the coefficients $\Gamma_n^k$ for various values of $H$. In Tables \ref{tab:0.51}--\ref{tab:0.99} the results for $H = 0.51$, $0.6$, $0.7$, $0.8$, $0.9$, and $0.99$ are listed for $2\le n\le 10$.

\begin{small}
\begin{table}
\caption{Coefficients $\Gamma_n^k$ for $H=0.51$}\label{tab:0.51}
\centering
\begin{tabular}{c|ccccccccc}
  \hline
$n\backslash k$ & 2 & 3 & 4 & 5 & 6 & 7 & 8 & 9 & 10\\
 \hline
2 & 0.01396 &  &  &  &  &  &  &  &  \\
  3 & 0.01389 & 0.00521 &  &  &  &  &  &  &  \\
  4 & 0.01387 & 0.00516 & 0.00339 &  &  &  &  &  &  \\
  5 & 0.01386 & 0.00515 & 0.00336 & 0.00253 &  &  &  &  &  \\
  6 & 0.01386 & 0.00514 & 0.00335 & 0.00250 & 0.00201 &  &  &  &  \\
  7 & 0.01385 & 0.00514 & 0.00334 & 0.00249 & 0.00199 & 0.00167 &  &  &  \\
  8 & 0.01385 & 0.00514 & 0.00334 & 0.00248 & 0.00198 & 0.00165 & 0.00143 &  &  \\
  9 & 0.01385 & 0.00513 & 0.00334 & 0.00248 & 0.00198 & 0.00165 & 0.00142 & 0.00125 &  \\
  10 & 0.01385 & 0.00513 & 0.00333 & 0.00248 & 0.00198 & 0.00164 & 0.00141 & 0.00124 & 0.00111 \\
   \hline
\end{tabular}
\end{table}

\begin{table}
\caption{Coefficients $\Gamma_n^k$ for $H=0.6$}\label{tab:0.6}
\centering
\begin{tabular}{c|ccccccccc}
  \hline
$n\backslash k$ & 2 & 3 & 4 & 5 & 6 & 7 & 8 & 9 & 10\\
  \hline
2 & 0.14870 &  &  &  &  &  &  &  &  \\
  3 & 0.14123 & 0.05020 &  &  &  &  &  &  &  \\
  4 & 0.13954 & 0.04542 & 0.03383 &  &  &  &  &  &  \\
  5 & 0.13868 & 0.04427 & 0.03031 & 0.02522 &  &  &  &  &  \\
  6 & 0.13817 & 0.04366 & 0.02942 & 0.02243 & 0.02013 &  &  &  &  \\
  7 & 0.13784 & 0.04329 & 0.02893 & 0.02170 & 0.01781 & 0.01675 &  &  &  \\
  8 & 0.13760 & 0.04303 & 0.02862 & 0.02129 & 0.01719 & 0.01477 & 0.01434 &  &  \\
  9 & 0.13742 & 0.04285 & 0.02840 & 0.02102 & 0.01683 & 0.01423 & 0.01262 & 0.01254 &  \\
  10 & 0.13728 & 0.04271 & 0.02824 & 0.02083 & 0.01660 & 0.01391 & 0.01214 & 0.01101 & 0.01114 \\
   \hline
\end{tabular}
\end{table}

\begin{table}
\caption{Coefficients $\Gamma_n^k$ for $H=0.7$}\label{tab:0.7}
\centering
\begin{tabular}{c|ccccccccc}
  \hline
$n\backslash k$ & 2 & 3 & 4 & 5 & 6 & 7 & 8 & 9 & 10\\
  \hline
2 & 0.31951 &  &  &  &  &  &  &  &  \\
  3 & 0.28867 & 0.09652 &  &  &  &  &  &  &  \\
  4 & 0.28207 & 0.07677 & 0.06840 &  &  &  &  &  &  \\
  5 & 0.27860 & 0.07288 & 0.05409 & 0.05074 &  &  &  &  &  \\
  6 & 0.27654 & 0.07069 & 0.05114 & 0.03946 & 0.04048 &  &  &  &  \\
  7 & 0.27518 & 0.06936 & 0.04942 & 0.03708 & 0.03117 & 0.03366 &  &  &  \\
  8 & 0.27421 & 0.06846 & 0.04835 & 0.03566 & 0.02917 & 0.02573 & 0.02881 &  &  \\
  9 & 0.27348 & 0.06782 & 0.04762 & 0.03476 & 0.02796 & 0.02401 & 0.02191 & 0.02518 &  \\
  10 & 0.27292 & 0.06733 & 0.04708 & 0.03413 & 0.02718 & 0.02295 & 0.02039 & 0.01907 & 0.02237 \\
   \hline
\end{tabular}
\end{table}

\begin{table}
\caption{Coefficients $\Gamma_n^k$ for $H=0.8$}\label{tab:0.8}
\centering
\begin{tabular}{c|ccccccccc}
  \hline
$n\backslash k$ & 2 & 3 & 4 & 5 & 6 & 7 & 8 & 9 & 10\\
  \hline
2 & 0.51572 &  &  &  &  &  &  &  &  \\
  3 & 0.44379 & 0.13947 &  &  &  &  &  &  &  \\
  4 & 0.42915 & 0.09287 & 0.10500 &  &  &  &  &  &  \\
  5 & 0.42108 & 0.08574 & 0.07202 & 0.07684 &  &  &  &  &  \\
  6 & 0.41637 & 0.08132 & 0.06676 & 0.05103 & 0.06130 &  &  &  &  \\
  7 & 0.41325 & 0.07873 & 0.06336 & 0.04690 & 0.04010 & 0.05089 &  &  &  \\
  8 & 0.41103 & 0.07698 & 0.06132 & 0.04414 & 0.03668 & 0.03291 & 0.04352 &  &  \\
  9 & 0.40938 & 0.07573 & 0.05993 & 0.04246 & 0.03435 & 0.02999 & 0.02790 & 0.03801 &  \\
  10 & 0.40810 & 0.07479 & 0.05892 & 0.04130 & 0.03292 & 0.02796 & 0.02534 & 0.02420 & 0.03373 \\
   \hline
\end{tabular}
\end{table}

\begin{table}
\caption{Coefficients $\Gamma_n^k$ for $H=0.9$}\label{tab:0.9}
\centering
\begin{tabular}{c|ccccccccc}
  \hline
$n\backslash k$ & 2 & 3 & 4 & 5 & 6 & 7 & 8 & 9 & 10\\
  \hline
2 & 0.74110 &  &  &  &  &  &  &  &  \\
  3 & 0.60809 & 0.17948 &  &  &  &  &  &  &  \\
  4 & 0.58213 & 0.09152 & 0.14465 &  &  &  &  &  &  \\
  5 & 0.56714 & 0.08204 & 0.08433 & 0.10362 &  &  &  &  &  \\
  6 & 0.55857 & 0.07506 & 0.07754 & 0.05671 & 0.08272 &  &  &  &  \\
  7 & 0.55290 & 0.07118 & 0.07223 & 0.05156 & 0.04445 & 0.06852 &  &  &  \\
  8 & 0.54889 & 0.06858 & 0.06921 & 0.04734 & 0.04028 & 0.03617 & 0.05851 &  &  \\
  9 & 0.54590 & 0.06673 & 0.06716 & 0.04492 & 0.03675 & 0.03266 & 0.03049 & 0.05105 &  \\
  10 & 0.54359 & 0.06535 & 0.06568 & 0.04326 & 0.03472 & 0.02962 & 0.02747 & 0.02633 & 0.04527 \\
   \hline
\end{tabular}
\end{table}

\begin{table}
\caption{Coefficients $\Gamma_n^k$ for $H=0.99$}\label{tab:0.99}
\centering
\begin{tabular}{c|ccccccccc}
  \hline
$n\backslash k$ & 2 & 3 & 4 & 5 & 6 & 7 & 8 & 9 & 10\\
  \hline
2 & 0.97247 &  &  &  &  &  &  &  &  \\
  3 & 0.76506 & 0.21328 &  &  &  &  &  &  &  \\
  4 & 0.72588 & 0.07275 & 0.18368 &  &  &  &  &  &  \\
  5 & 0.70233 & 0.06342 & 0.09059 & 0.12824 &  &  &  &  &  \\
  6 & 0.68917 & 0.05413 & 0.08408 & 0.05617 & 0.10262 &  &  &  &  \\
  7 & 0.68047 & 0.04937 & 0.07696 & 0.05159 & 0.04422 & 0.08474 &  &  &  \\
  8 & 0.67435 & 0.04617 & 0.07323 & 0.04602 & 0.04065 & 0.03555 & 0.07228 &  &  \\
  9 & 0.66979 & 0.04393 & 0.07067 & 0.04312 & 0.03604 & 0.03265 & 0.02980 & 0.06299 &  \\
  10 & 0.66628 & 0.04227 & 0.06885 & 0.04111 & 0.03363 & 0.02870 & 0.02735 & 0.02560 & 0.05582 \\
   \hline
\end{tabular}
\end{table}
\end{small}

Observe that
\begin{enumerate}
\item All coefficients are positive.

\item The first coefficient in each row is the largest, i.e.\ $\Gamma_n^2>\Gamma_n^k$ for any $3\le k \le n$. Moreover, often it is substantially larger than any other coefficient in the row.

\item The conjecture concerning the monotonicity of coefficients (decrease along each row) does not hold in general. If we take sufficiently large values of $H$, for example $H=0.9$, we see that the coefficient $\Gamma_n^3$ is always less than $\Gamma_4$. Moreover, the last coefficient $\Gamma_n^n$ is bigger than $\Gamma_n^{n-1}$ for sufficiently large~$H$.

\item The monotonicity along each column holds, i.e.\ $\Gamma_n^k > \Gamma_{n+1}^k$ for fixed $k$.

\item The limiting distribution of the coefficients as $H\uparrow1$ is not uniform.

\item It immediately follows from \eqref{eq:Cn+1k} that the coefficients satisfy the following relation:
\begin{gather*}
    \Gamma_{n+1}^{k+1}=\Gamma_{n}^{k+1}-\Gamma_{n+1}^{n+1}\Gamma_{n}^{n-k+1},
\end{gather*}
whence
\begin{gather*}
    \Gamma_{n+1}^{k+1}-\Gamma_{n+1}^{k}=\Gamma_{n}^{k+1}-\Gamma_{n}^{k}-\Gamma_{n+1}^{n+1}\left(\Gamma_{n}^{n-k+1}-\Gamma_{n}^{n-k+2}\right).
\end{gather*}
The second of these relations makes us expect that knowing that the coefficients decrease in $k$ for $n$ fixed and that the last coefficient $\Gamma_{n+1}^{n+1}$ is positive, we can prove that they decrease in $k$ for $n\rightsquigarrow n+1$ by induction.  Unfortunately, if we take $n=3$ as the start of the induction, we see that such a relation holds only if $k=2$, and indeed, $\Gamma_4^2> \Gamma_4^3$ as we know from Proposition  \ref{prop:n4-monot}. But the relation between $\Gamma_4^3$ and $\Gamma_4^4$ is not so stable and depends on $H$, see Figure \ref{fig:n=4}. Therefore, we cannot state that $\Gamma_4^3>\Gamma_4^4$.
\end{enumerate}

\subsection{Comparison of the methods. Computation time}
Let us compare the two methods in terms of computation time. The first method (solving the system of equations) was realized using the R function \texttt{solve()}. We considered two problems:
\begin{enumerate}
\item For fixed $n$, compute the coefficients $\Gamma_n^k$, $2\le k\le n$, i.e.\ compute the $n$th row of the matrix.
\item Compute the whole triangular array $\set{\Gamma_m^k\mid 2\le m\le n, 2\le k\le m}$. This requires solving $(n-1)$ systems of equations.
\end{enumerate}
The second method (recurrence relations) always gives us the whole array of coefficients, which can be considered as an advantage.

Let us mention that both methods give exactly the same values of the coefficients.

We also compared the time needed for computation on an Intel Core i3-8145U processor by each method. The results are shown in Table~\ref{tab:time}.
Observe that the recurrence method is always faster, especially for large $n$, if we need to compute the whole matrix. It takes less than $2$ seconds for $n=2000$, while solving all systems of equations takes more that $21$ minutes. Moreover, for large $n$ the recurrence method is even faster than the calculation of a single row of the matrix, which requires solving only one system of equations.

\begin{table}[h]
\caption{Computation time}\label{tab:time}
\begin{tabular}{lllll}
\toprule
$n$ & 100 & 500 & 1000 & 2000\\
\midrule
System method (last row) & 0.02 secs & 0.20 secs & 0.51 secs & 3.10 secs\\
System method (whole matrix) & 0.17 secs & 16.46 secs & 2.27 mins & 21.78 mins\\
Recurrence method & 0.04 secs & 0.19 secs & 0.48 secs & 1.83 secs\\
\bottomrule
\end{tabular}
\end{table}

\subsection{Remarks on the case $H<\frac12$}\label{ssec:H<1/2}

In this paper we mainly focus on the case $H>\frac12$ (the case of long-range dependence). In this section we give some brief comment on the other case $H<\frac12$.

\medskip
1. Using the complete monotonicity of $-\rho$ (see Lemma~\ref{l:cm}), we can show that in the case $H<\frac12$, the inequalities for $\rho_k$ from Corollary~\ref{c:rho-properties}, Properties 1 and 2, remain valid with opposite signs (the sign \enquote{$<$} instead of the sign \enquote{$>$}). In other words, the sequence $\set{\rho_k, k\ge0}$ is negative, increasing and concave. However, it remains log-convex, i.e.\ Property 3 of Corollary~\ref{c:rho-properties} holds for all $H\in (0,1/2)\cup(1/2,1)$.

\medskip
2. The behaviour of the coefficients for $n=3,4,5$ is shown in Figures \ref{fig:n=3b}--\ref{fig:n=5b}. We see that for $H<\frac12$, the coefficients are negative and increasing w.r.t.\ $H$. Moreover, for all $H<\frac12$ we also observe the monotonicity w.r.t.\ $k$, i.e. $\Gamma_n^k<\Gamma_n^{k+1}$ (unlike the case $H>\frac12$).

\begin{figure}
    \centering
    \begin{minipage}{0.47\textwidth}
        \centering
        \includegraphics[width=\textwidth]{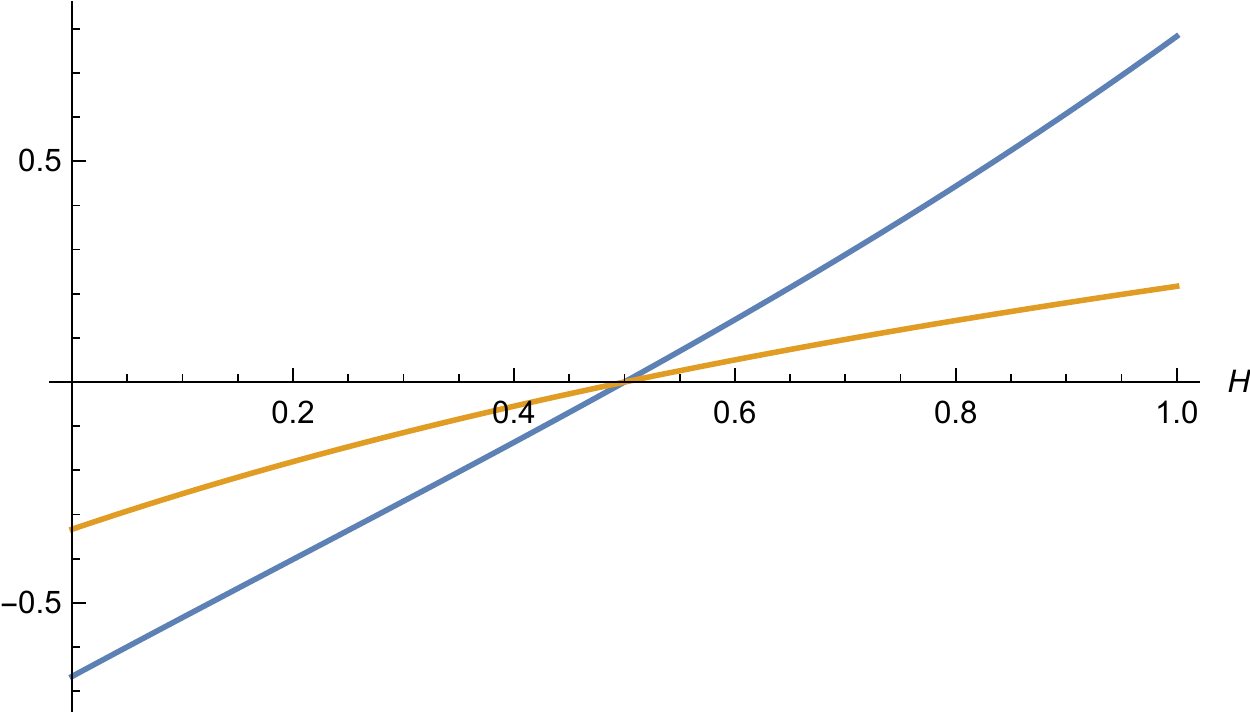} % first figure itself
        \caption{Case $n=3$: $\Gamma_3^2$ and $\Gamma_3^3$ as the functions of $H$}\label{fig:n=3b}
    \end{minipage}\hfill
    \begin{minipage}{0.47\textwidth}
        \centering
        \includegraphics[width=\textwidth]{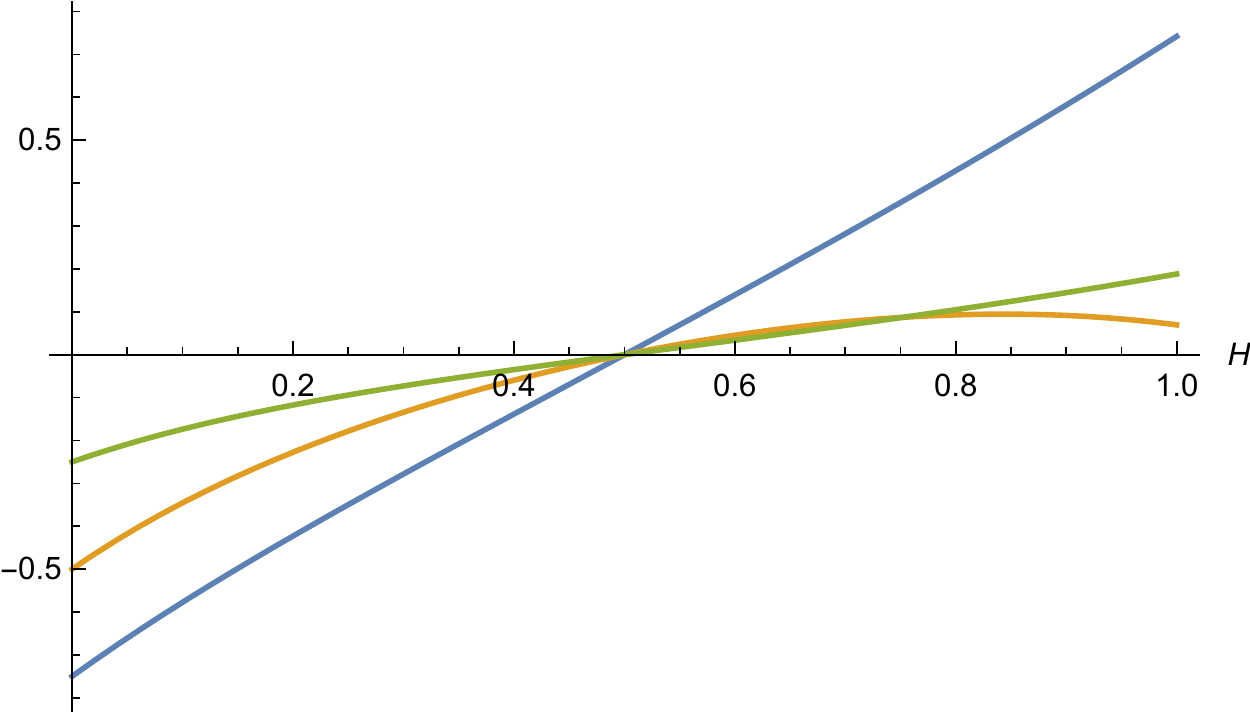} % second figure itself
        \caption{Case $n=4$: $\Gamma_4^2$, $\Gamma_4^3$, and $\Gamma_4^4$ as the functions of $H$}\label{fig:n=4b}
    \end{minipage}
    \\[12pt]
    \begin{minipage}{0.47\textwidth}
        \centering
        \includegraphics[width=\textwidth]{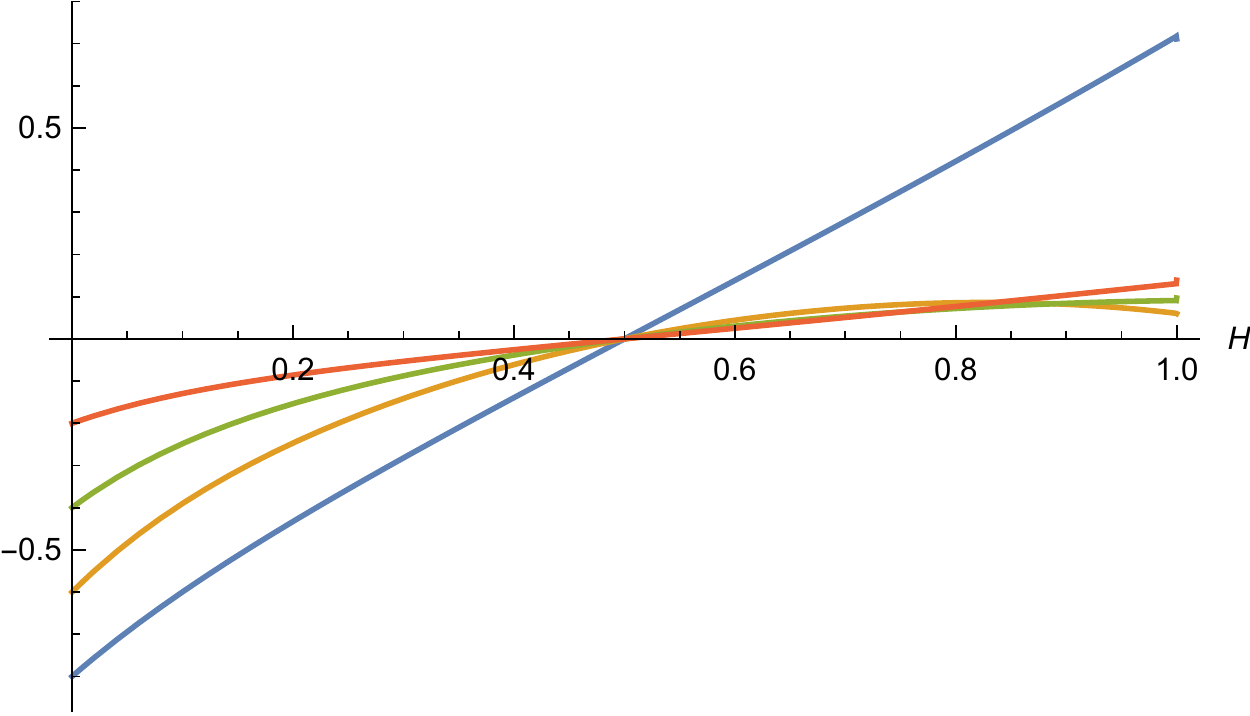} % first figure itself
        \caption{Case $n=5$: $\Gamma_5^2$, $\Gamma_5^3$, $\Gamma_5^4$, and $\Gamma_5^5$ depending on $H$}\label{fig:n=5b}
    \end{minipage}
\end{figure}

\medskip
3. Let $H=0$.
In this case $B^H_n = B^0_n = \left(\xi_n-\xi_0\right)/{\sqrt{2}}$,
where $\{\xi_i,i\ge0\}$ is a sequence of i.i.d.\ $\Normal(0,1)$ random variables.
So, $\Delta_1 = \left({\xi_1-\xi_0}\right)/{\sqrt{2}}$ and, in general,
\begin{gather*}
    \Delta_k = \frac{\xi_k-\xi_{k-1}}{\sqrt{2}},\quad k\ge1.
\end{gather*}
Consider the equality
\begin{gather*}
    \Ee \left(\Delta_1 \mid \Delta_2,\dots,\Delta_n\right)
    = \sum_{k=2}^n \Gamma_n^k \Delta_k,
    \quad n\ge2.
\end{gather*}
Then
\begin{align*}
    \Ee \Delta_1 \Delta_2     &= -\frac12,          & \Ee \Delta_1 \Delta_k &= 0,\; k>2,\\
    \Ee \Delta_k \Delta_{k+1} &= -\frac12,\; k\ge2, & \Ee \Delta_k \Delta_l &= 0,\; |l-k|>1.
\end{align*}
Therefore, the system of linear equations has the form
\begin{gather*}
    \left\{\begin{aligned}
    -\frac12    &= \Gamma_n^2 - \frac12 \Gamma_n^3,\\
    0           &= -\frac12 \Gamma_n^k + \Gamma_n^{k+1} - \frac12 \Gamma_n^{k+2}, & 2\le k\le n-2,\\
    0           &= -\frac12 \Gamma_n^{n-1} + \Gamma_n^n,
    \end{aligned}\right.
\end{gather*}
and we get
\begin{gather*}
    \Gamma_n^n = -\tfrac12 \Gamma_n^{n-1},\quad
    \Gamma_n^2 =  \tfrac12 \Gamma_n^3 - \tfrac12,\quad
    \Gamma_n^3 =  \tfrac23 \Gamma_n^4 - \tfrac13,\dots,\\
    \Gamma_n^k =  \tfrac{k-1}{k} \Gamma_n^{k+1} - \tfrac1k,
    \dots,
    \Gamma_n^{n-1} =  \tfrac{n-2}{n-1} \Gamma_n^{n} - \tfrac1{n-1}.
\end{gather*}
Finding $\Gamma_n^n$ and $\Gamma_n^{n-1}$ from the first and last equations, and then calculating successively $\Gamma_n^{n-2},\dots,\Gamma_n^2$, we get the following solution
\begin{gather*}
\Gamma_n^k = \frac{n-k+1}{n},\quad k=2,\dots,n.
\end{gather*}


\begin{thebibliography}{99}\frenchspacing
\bibitem{book-approx}
O. Banna, Y. Mishura, K. Ralchenko, S. Shklyar (2019): \textit{Fractional Brownian Motion: Approximations and Projections.} ISTE Ltd. \& Wiley.

\bibitem{biagini}
F. Biagini, Y. Hu, B. {\O}ksendal, T. Zhang (2008): \textit{Stochastic calculus for fractional Brownian motion and applications}, Probability and its Applications (New York), Springer-Verlag London, Ltd., London. %MR2387368

\bibitem{dieker}
A.B. Dieker\ and\ M. Mandjes (2003): On spectral simulation of fractional Brownian motion, \emph{Probab. Engrg. Inform. Sci.} \textbf{17}, 417--434. %MR1984656


\bibitem{gupta}
A. Gupta\ and\ S. Joshi (2008): Some studies on the structure of covariance matrix of discrete-time fBm, \emph{IEEE Trans. Signal Process.} \textbf{56}, 4635--4650.

\bibitem{kijima}
M. Kijima and C.M. Tam (2013): Fractional Brownian motions in financial models and their Monte Carlo simulation. \emph{Theory and Applications of Monte Carlo Simulations}, 53--85.

\bibitem{Mishura2008}
Y.S. Mishura (2008): \emph{Stochastic calculus for fractional Brownian motion and related processes}. Lecture Notes in Mathematics \textbf{1929},  Springer-Verlag, Berlin.

\bibitem{risks}
Y. Mishura, K. Ralchenko, S. Shklyar (2020): General conditions of weak convergence of discrete-time multiplicative scheme to asset price with memory. \emph{Risks} \textbf{8}(1), 11.

\bibitem{tsp}
Y. Mishura and G. Shevchenko (2017). \emph{Theory and Statistical Applications of Stochastic Processes}. John Wiley \& Sons.

\bibitem{montillet}
J.P. Montillet and K.Yu (2015): Covariance matrix analysis for higher order fractional Brownian motion time series. In: IEEE 28th Canadian Conference on Electrical and Computer Engineering (CCECE), pp. 1420--1424. IEEE.

\bibitem{nourdin}
I. Nourdin (2012): \emph{Selected aspects of fractional Brownian motion}, Bocconi \& Springer Series \textbf{4}, Springer, Milan, 2012.

\bibitem{samorodnitsky}
G. Samorodnitsky (2006): Long range dependence, \emph{Found. Trends Stoch. Syst.} \textbf{1}, 163--257.

\bibitem{ssv2012}
R.L. Schilling, R. Song\ and\ Z. Vondra\v{c}ek (2012) \emph{Bernstein functions} (2nd ed), De Gruyter Studies in Mathematics \textbf{37}, de Gruyter, Berlin.
\end{thebibliography}
\end{document}